\newcommand{\N}{\mathbb{N}}
\newcommand{\vect}[1]{\boldsymbol{#1}}
\pgfplotsset{compat=1.5.1}
\theoremstyle{definition}
\theoremstyle{plain}
\newtheorem{theorem}{Theorem}
\newtheorem{definition}[theorem]{Definition}
\newtheorem{lemma}[theorem]{Lemma}
\newtheorem{proposition}[theorem]{Proposition}
\newtheorem{claim}[theorem]{Claim}
\newtheorem{remark}[theorem]{Remark}
\newtheorem{question}{Question}
\author[Alex Alochukwu et al.]{  Alex Alochukwu\affiliationmark{1}\thanks{This work is an output of the American Mathematical Society's Mathematics Research Community (MRC): Trees in Many Contexts. }\and Audace Dossou-Olory \affiliationmark{2}
  \and Fadekemi Janet Osaye\affiliationmark{3}\\
  \and  Valisoa R.~M.~Rakotonarivo\affiliationmark{4} \and Shashank Ravichandran\affiliationmark{5}\\\and Sarah J.~Selkirk\affiliationmark{6} \and Hua Wang\affiliationmark{7}\and Hays Whitlatch\affiliationmark{8}}
\title[Characterization of Trees with Maximum Security]{Characterization of Trees with Maximum Security}
\affiliation{
Department of Mathematics, Computer Science and Physics, Albany State University, Albany, United States of America.\\
Institut National de l'Eau, 
and Centre d'Excellence d'Afrique pour l'Eau et
l'Assainissement and Institut de Mathématiques et de Sciences Physiques, Université d'Abomey-Calavi, Abomey-Calavi, Bénin.\\
  Department of Mathematics and Statistics, Troy University, Troy, United States of America.\\
  University of Pretoria, Pretoria, South Africa.\\
  Department of Mathematical Sciences, University of Delaware, Newark, United States of America.\\
  Institut
  f\"ur Mathematik, Alpen-Adria-Uni\-ver\-si\-t\"at Klagenfurt,
  Klagenfurt am Wörthersee, Austria; Centre for Discrete Mathematics and its Applications (DIMAP) \&
  Department of Computer Science, University of Warwick,
  Coventry, United Kingdom.\\
  Georgia Southern University, Statesboro, United States of America.\\
  Department of Mathematics, Gonzaga University, 
Spokane, United States of America.}
\keywords{Protection number, binary trees, rank, security}
\begin{document}
\publicationdata{vol. 28:2}{2026}{12}{10.46298/dmtcs.14872}{2024-12-02; 2024-12-02; 2025-10-28; 2026-01-21}{2026-02-05}

\maketitle
\begin{abstract}
   The rank (also known as protection number or leaf-height) of a vertex in a rooted tree is the minimum distance between the vertex and any of its leaf descendants. We consider the sum of ranks over all vertices (known as the security) in proper binary trees, and produce a classification of families of proper binary trees for which the security is maximized. In addition, extremal results relating to the maximum rank among all vertices in families of trees are discussed
\end{abstract}



\section{Introduction}
\label{sec:intro}

A large body of research in graph theory is devoted to distance in graphs. Some such examples are the diameter 
(greatest distance between any pair of vertices), domination (restriction of distance from a set of vertices), 
and independence (restriction of distance between elements of a set of vertices). A particular distance parameter 
and centrality measure, the \emph{eccentricity} of a vertex $v$, is defined to be the maximum distance between $v$ and 
any other vertex in the graph. Among all vertices, one with minimum eccentricity is called the \emph{center} of 
the graph. Formally, given a graph $G = (V, E)$ and a vertex $v \in V$, the eccentricity of $v$ is given by 
\begin{equation*}
    \mathsf{ecc}(v) = \max_{u \in V}d(u, v).
\end{equation*}
When vertices and edges in $G$, for example, represent customers and road networks, respectively, a vertex with minimum eccentricity (center) would be an ideal location to place a store or warehouse aiming to service the customers. Therefore, 
eccentricity has applications, among others, in supply chain management. 

We now consider a version of ``anti-eccentricity'' of a vertex, taking the minimum instead of maximum. However, in the context of graphs, the minimum distance among all vertices is trivially $0$. We therefore consider the graph family of trees, and the minimum distance to a leaf. This measure of ``anti-eccentricity'' is known as \emph{rank}, \emph{protection number}, or \emph{leaf-height} and usually studied in rooted trees. The history and formal definitions of this parameter are given below. 

A vertex in a rooted tree is called \emph{protected} if it is neither a leaf nor the 
neighbour of a leaf. This concept was first studied in~\cite{cheon}, and later the
number of protected vertices in families of trees including $k$-ary trees, digital and 
binary search trees, tries, and suffix trees was 
determined~\cite{Devroye-Janson:2014:protec,Du-Prodinger:2012:notes,Gaither-Homma-Sellke-Ward:2012:protec,
Holmgren-Janson:2015:asymp,Mahmoud-Ward:2015:asymp,Mahmoud-Ward:2012:protec,Mansour:2011:protec}. 
The definition of protected vertices was then generalized to \emph{$\ell$-protected vertices}: 
vertices at a distance of at least $\ell$ away from any leaf descendant. With this terminology, a 
protected vertex is equivalent to a 2-protected vertex. The number of $\ell$-protected vertices 
has also been studied in a wide range of trees~\cite{Bona:2014, Bona-Pittel:2017, Copenhaver:2017:protec,
Devroye-Janson:2014:protec, Heuberger-Prodinger:2017:protec-number-plane-trees}. This leads to the 
formal definition of the protection number, examples of which can be seen in Figure~\ref{fig:rank-examples}.

\begin{definition}[Protection number]
    Let $T$ be a rooted tree, $v$ be a vertex in $T$, $T(v)$ the subtree induced by $v$ and its descendants, and $L(T(v))$ its set of leaves. The \emph{protection number} $R_T(v)$ of $v$ is given by 
    \begin{equation*}
        R_T(v) = \min_{u \in L(T(v))}d(u, v).
    \end{equation*} 
\end{definition}

\begin{figure}[H]
    \centering

    \begin{tikzpicture}[scale = 0.9]
        \node[circle, draw] (0) at (0, 0){2};
        \node[circle, draw] (1) at (-2, -2/2){1};
        \node[circle, draw] (2) at (2, -2/2){1};
        \node[circle, draw] (3) at (-3, -4/2){0};
        \node[circle, draw] (4) at (-1, -4/2){1};
        \node[circle, draw] (5) at (1, -4/2){0};
        \node[circle, draw] (6) at (3, -4/2){2};
        \node[circle, draw] (7) at (-1.5, -6/2){0};
        \node[circle, draw] (8) at (-0.5, -6/2){0};
        \node[circle, draw] (9) at (2, -6/2){1};
        \node[circle, draw] (10) at (4, -6/2){1};
        \node[circle, draw] (11) at (1.5, -8/2){0};
        \node[circle, draw] (12) at (2.5, -8/2){0};
        \node[circle, draw] (13) at (3.5, -8/2){0};
        \node[circle, draw] (14) at (4.5, -8/2){0};

        \draw[-] (0) to (1);
        \draw[-] (0) to (2);
        \draw[-] (1) to (3);
        \draw[-] (1) to (4);
        \draw[-] (4) to (7);
        \draw[-] (4) to (8);
        \draw[-] (2) to (5);
        \draw[-] (2) to (6);
        \draw[-] (6) to (9);
        \draw[-] (6) to (10);
        \draw[-] (9) to (11);
        \draw[-] (9) to (12);
        \draw[-] (10) to (13);
        \draw[-] (10) to (14);
    \end{tikzpicture}

    \caption{A binary tree with the protection number of each vertex marked on the vertex.}
    \label{fig:rank-examples}
\end{figure}
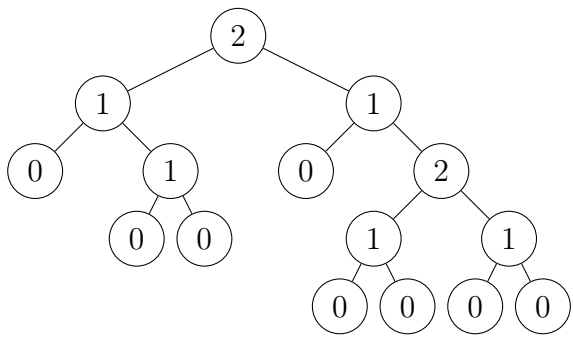

Studies on the protection number of the root of a tree in specific families of trees as well as the general context of simply generated trees were completed in~\cite{Gittenberger-Golebiewski-Larcher-Sulkowska:2021:protec,Golebiewski-Klimczak:2019:protec,Heuberger-Prodinger:2017:protec-number-plane-trees}. Most recently, extremal parameters such as the maximum protection number of a tree have been studied using powerful probabilistic and analytic techniques~\cite{Devroye-Goh-Zhao:2023:peel, Heuberger-Selkirk-Wagner:2023:protec}. This work serves to further continue this trend, by providing constructive solutions to extremal problems of this variety. 

It is easy to imagine a range of applications for extremal parameters relating to the protection number. Depending on what the leaves represent, one may wish to leave as many vertices unprotected (if leaves represent customers that need to be easily contacted, for instance) or to have as many vertices protected as possible (if leaves represent points of entry for hackers, for instance). 
Trees with degree constraints, binary trees, in particular, have been shown to play a significant role in such applications.

From this point onwards, we will mainly use the term ``rank'' instead of ``protection number'', since it reduces the phrase length and, thus, the ease of parsing phrases in several instances\footnote{We have predominantly used ``protection number'' until now to ensure that this work connects to the other bodies of work on this topic.}.  

With current literature in mind, we consider the rank from a constructive perspective: 
\begin{enumerate}
\item find trees with a given number of vertices that maximize or minimize the number of $\ell$-protected vertices for a given $\ell$; or \label{item:1}
\item find trees with a maximum or minimum sum of ranks of its vertices, given the number of vertices. \label{item:2} 
\end{enumerate}

As may be expected, the solutions to these two questions are in general identical. 
For general rooted trees it is not hard to see (as shown in Section~\ref{sec:most-protected}) 
that the path is the ``most protected'' among trees of order $n$ while the star $S_n$ is the ``least protected''. This may not be surprising as the path and the star are, respectively, the sparsest and densest tree among general trees of the same order, and they have been shown to be the extremal trees with respect to many different graph invariants. We aim to extend the study of the least and most protected trees to rooted proper $k$-ary trees of given order and trees with a given outdegree sequence. 

Our main contribution is the classification of proper binary trees which achieve~(\ref{item:2}), and thus also~(\ref{item:1}). For this, we introduce the following terminology and notation. 

\begin{definition}[Security]
    Let $T$ be a rooted tree. The \emph{security} of $T$, denoted $R(T)$, is given by 
    \begin{equation*}
        R(T) = \sum_{v \in V(T)} R_T(v).
    \end{equation*}
    Additionally, the number of $\ell$-protected vertices in $T$ is denoted by $n_{\ell}(T)$.
\end{definition}

For the binary tree $T$ in Figure~\ref{fig:rank-examples}, the security is given by $R(T) = 0\cdot 8 + 1\cdot 5+ 2\cdot 2 = 9$. We seek trees that maximize or minimize $R(T)$ and $n_{\ell}(T)$ for any $\ell$. 

The layout of this paper is as follows. In Section~\ref{sec:binary-power} we provide, among other things, a proof that a family of binary trees obtains maximum security. This is done via ``switching lemmata'' which show that certain ``switching'' operations do not decrease the security of a tree. As a consequence, we also provide a formula for the maximum $R(T)$. Then, in Section~\ref{sec:almost-complete} we provide an additional classification of a family of trees that obtain maximum security. This is done by showing that these trees have equal security to those already proved to have maximum security in Section~\ref{sec:binary-power}. It is interesting to note that this additional family of trees has appeared in many studies of other extremal problems on binary trees. To conclude, we provide some extremal results relating to the maximum rank in different families of trees in Section~\ref{sec:most-protected}.

\section{Most protected binary trees: From binary representation}\label{sec:binary-power}

Proper binary trees, which in our context refer to unordered rooted trees where each internal vertex has exactly two children,
are one of the most commonly studied structures when studying ranks. We start by considering trees that maximize the security, $\max{R(T)}$, among all proper binary trees on $\ell$ leaves. Note that the trees achieving the maximum for the total rank need not be unique, as seen in Figure \ref{fig:maxbin}. Indeed, for certain values of  $\ell$, the tree attaining the maximum security is unique, while for other values of  $\ell$, it is not. In this section, we will give a characterization of one of the optimal proper binary trees. 

\begin{figure}[H]
    \centering
    	\begin{tikzpicture}[nodes={fill=black,circle,inner sep=1.4pt}, sibling distance=2cm,scale=0.6]
		\node{}[sibling distance=3cm]
		child { node {} [sibling distance=1.5cm]
			child { node{} [sibling distance=1cm]
					child { node{}}
					child { node{}}}
			child { node {}[sibling distance=1cm]
					child { node{}}
					child { node{}}}
		}
		child { node{}[sibling distance=1.5cm]
			child { node {} [sibling distance=1cm]
							child { node{}}
							child { node{}}	}
			child { node {} }};

	\node at (6,0) {}[sibling distance=3cm]
	child { node {} [sibling distance=1.5cm]
		child { node{} [sibling distance=1.5cm]
			child { node{}[sibling distance=1cm]
				child{node{}}
				child{node{}}}
			child { node{}}[sibling distance=1cm]
				child{node{}}
				child{node{}}}
		child { node {}}
	}
	child { node{}[sibling distance=1.5cm]
		child { node {} }
		child { node {} }};

	\node at (12,0){}[sibling distance=3cm]
	child { node {} [sibling distance=2cm]
		child { node{} [sibling distance=1.5cm]
			child { node{}[sibling distance=1cm]
				child{node{}}
				child{node{}}}
			child { node{}}[sibling distance=1cm]
				child{node{}}
				child{node{}}}
		child { node {}[sibling distance=1.5cm]
			child{node{}}
			child{node{}}}
	}
	child { node{}};

    \node at (18,0){}[sibling distance=3cm]
	child { node {} [sibling distance=2cm]
		child { node{} [sibling distance=1.5cm]
			child { node{}[sibling distance=1cm]
				child{node{}}
				child{node{}}}
			child { node{}}}
		child { node {}[sibling distance=1.5cm]
			child{node{}}
			child{node{}}}
	}
	child { node{} [sibling distance=2cm]
                child{node{}}
                child{node{}}};
\end{tikzpicture}
    \caption{All four non-isomorphic proper binary trees which achieve the maximum total rank of 8 among all proper binary trees with 7 leaves.}
    \label{fig:maxbin}
\end{figure}
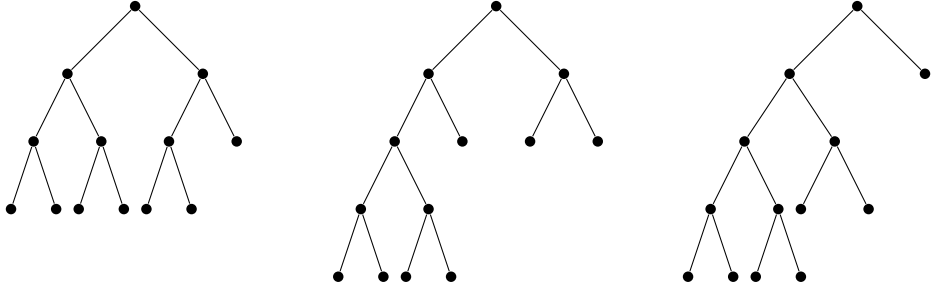

Let $\vect{L}:=(n_1, n_2, \ldots, n_k)$ correspond to the \emph{binary power representation} of 
$$ \ell = \sum_{i=1}^{k} 2^{n_i} $$
with $n_1 > n_2 > \cdots > n_k \geq 0$. We will show that $T_{\vect{L}}$, defined below, is a tree for which the value of $R(T)$ is maximized. Intuitively, $T_{\vect{L}}$ is the binary caterpillar tree on $k$ leaves $u_1, \ldots, u_k$, with $u_1$ being the deepest leaf and $u_k$ the leaf incident to the root, with each leaf $u_i$ identified with the root of a complete binary tree of height $n_i$. Formally, this is described below. 

\begin{definition}\label{def:T-L}
For a fixed integer $\ell \geq 2$ with binary power representation $\vect{L} = (n_1, n_2, \ldots, n_k)$, the binary tree $T_{\vect{L}}$ is rooted at a vertex $v_k$ and constructed by:
\begin{itemize}
\item identifying the root of a complete binary tree on $2^{n_1}$ leaves with one end $v_1$ of a path $P=v_1v_2\ldots v_k$;
\item using an edge to join the root $u_i$ of a complete binary tree on $2^{n_i}$ leaves with vertex $v_i$ (for $2 \leq i \leq k$).
\end{itemize}
\end{definition}

As an example, for $\ell = 11 = 2^3 + 2^1 + 2^0$ the binary power representation is $\vect{L} = (3,1,0)$, and $T_{\vect{L}}$ is shown in Figure~\ref{fig:tl}, where the vertices $v_i$ and $u_i$ are labelled. 

\begin{figure}[H]
\centering
    \begin{tikzpicture}[scale=.6]
        \node[fill=black,circle,inner sep=1.4pt] (t1) at (0,0) {};
        \node[fill=black,circle,inner sep=1.4pt] (t2) at (1,1) {};
        \node[fill=black,circle,inner sep=1.4pt] (t3) at (2.5,0) {};
        \node[fill=black,circle,inner sep=1.4pt] (t4) at (2,2) {};
        \node[fill=black,circle,inner sep=1.4pt] (t5) at (3,1) {};
        \node[fill=black,circle,inner sep=1.4pt] (t5') at (4,0) {};
		\node[fill=black,circle,inner sep=1.4pt] (t6) at (3,3) {};
        \node[fill=black,circle,inner sep=1.4pt] (t7) at (4,2) {};
        \node[fill=black,circle,inner sep=1.4pt] (t8) at (4,4) {};
        \node[fill=black,circle,inner sep=1.4pt] (t9) at (5,3) {};
        \node[fill=black,circle,inner sep=1.4pt] at (1.5,0) {};
        \node[fill=black,circle,inner sep=1.4pt] at (-0.4,-1) {};
        \node[fill=black,circle,inner sep=1.4pt] at (0.4,-1) {};
        \node[fill=black,circle,inner sep=1.4pt] at (1.1,-1) {};
        \node[fill=black,circle,inner sep=1.4pt] at (1.8,-1) {};
        \node[fill=black,circle,inner sep=1.4pt] at (2.2,-1) {};
        \node[fill=black,circle,inner sep=1.4pt] at (2.9,-1) {};
        \node[fill=black,circle,inner sep=1.4pt] at (3.6,-1) {};
        \node[fill=black,circle,inner sep=1.4pt] at (4.4,-1) {};
        \node[fill=black,circle,inner sep=1.4pt] at (3.5,1) {};
        \node[fill=black,circle,inner sep=1.4pt] at (4.5,1) {};

        \draw (t4)--(4,0);
        \draw (1,1)--(1.5,0);
        \draw (3,1)--(2.5,0);
        \draw (t8)--(0,0);
        \draw (t7)--(t6);
        \draw (t8)--(t9);
        \draw (t7)--(4.5,1);
        \draw (t7)--(3.5,1);
        \draw (0,0)--(-.4,-1);
        \draw (0,0)--(.4,-1);
        \draw (0+1.5,0)--(-.4+1.5,-1);
        \draw (0+1.5,0)--(.3+1.5,-1);
        \draw (0+2.5,0)--(-.3+2.5,-1);
        \draw (0+2.5,0)--(.4+2.5,-1);
        \draw (0+4,0)--(-.4+4,-1);
        \draw (0+4,0)--(.4+4,-1);

        \node at (4,4.3) {$v_3$};
        \node at (2.8,3.3) {$v_2$};
        \node at (1,2.3) {$u_1 = v_1$};
        \node at (5.1,3.3) {$u_3$};
        \node at (4.1,2.3) {$u_2$};

        \end{tikzpicture}
    \caption{A proper binary tree $T_{\vect{L}}$ on $\ell = 11$ leaves that maximizes $R(T)$.}
    \label{fig:tl}
\end{figure}
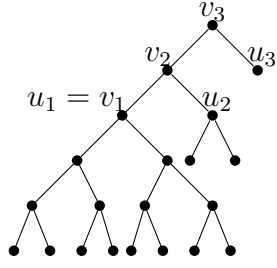

The main result of this section states that $T_{\vect{L}}$ is one of the optimal proper binary trees that maximizes $R(T)$.

\begin{theorem}\label{theo:tl}
Given a proper binary tree $T$ with $\ell$ leaves, where $\ell$ has a binary power representation of $\vect{L}$, it holds that $R(T) \leq R(T_{\vect{L}})$.
\end{theorem}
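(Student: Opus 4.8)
The plan is to reduce an arbitrary proper binary tree $T$ to the canonical tree $T_{\vect{L}}$ through a sequence of local ``switching'' operations, each of which preserves the number of leaves and does not decrease $R$. The central tool is the reformulation of the security in terms of protected vertices: writing $n_r(T)$ for the number of vertices of rank at least $r$, we have $R(T) = \sum_{r \ge 1} n_r(T)$, and from the recursion $R_T(v) = 1 + \min\{R_T(c_1), R_T(c_2)\}$ over the two children $c_1, c_2$ of an internal vertex $v$ one reads off the key fact that \emph{$v$ is $r$-protected if and only if both of its children are $(r-1)$-protected}. Equivalently, $v$ is $r$-protected exactly when the top $r$ levels of $T(v)$ form a perfect binary tree; in particular $R_T(v) \le \lfloor \log_2 |L(T(v))| \rfloor$, so deep, balanced subtrees are precisely what make the ranks, and hence the security, large. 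This is the lens through which each switch is analysed. A useful monotonicity accompanies it: raising the root-rank of any subtree can only raise the ranks of its ancestors, since each ancestor rank is one plus the smaller of its two children's ranks.

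First I would establish the building block: among all proper binary trees with $2^n$ leaves, the complete binary tree $C_n$ maximizes $R$, with $R(C_n) = 2^{n+1} - (n+2)$. I would prove this by strong induction on the number of leaves, splitting at the root into subtrees with $a$ and $2^n - a$ leaves. The inductive hypothesis controls the security of each part, while the observation above caps the root's rank by $1 + \min\{\lfloor \log_2 a\rfloor, \lfloor \log_2(2^n-a)\rfloor\}$, so the balanced split $a = 2^{n-1}$ into two copies of $C_{n-1}$ simultaneously maximizes the two subtree securities and the root rank. Combined with the monotonicity above, this yields the \textbf{completion switch}: any subtree on a power-of-two number of leaves may be replaced by the corresponding complete binary tree without decreasing $R$, because both the local security and the root-rank (hence all ancestor ranks) can only increase.

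With completion in hand, one argues that a security-maximal tree may be taken to be a ``caterpillar of complete binary trees'': a spine off of which hang complete binary trees $C_{m_1}, \dots, C_{m_t}$. Two further switches bring this into the form $T_{\vect{L}}$. The \textbf{merge switch} shows that two hanging copies of $C_n$ can be recombined into a single $C_{n+1}$ without decreasing $R$ (using that $C_{n+1}$ has root-rank $n+1$, against the capped rank of the lower spine vertex), which drives the hanging sizes toward \emph{distinct} powers of two, i.e.\ exactly the binary power representation $2^{n_1} > \dots > 2^{n_k}$ of $\ell$. The \textbf{ordering switch} then shows that placing the complete trees in decreasing order of size down the spine does not decrease $R$: since the rank of a spine vertex is one plus the smaller of the rank of its hanging tree and the rank of the spine below it, putting larger (higher-rank) trees deeper makes each minimum as large as possible, giving $R_{T_{\vect{L}}}(v_i) = 1 + n_i$ for $i \ge 2$. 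Chaining these non-decreasing switches from $T$ to $T_{\vect{L}}$ yields $R(T) \le R(T_{\vect{L}})$, and summing the contributions $R(C_{n_i}) = 2^{n_i+1} - (n_i + 2)$ against the spine ranks gives the closed form $R(T_{\vect{L}}) = 2\ell - n_1 - k - 1$.

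I expect the main obstacle to be that the switches are not genuinely local: because the rank of a vertex depends on the \emph{shallowest} leaf in its subtree, relocating or merging a hanging tree can alter the ranks of every spine vertex above it at once through the nested minima, so verifying that a switch never decreases $R$ requires tracking this global effect rather than a single vertex, and the merge switch in particular may temporarily lower one root-rank while raising another, forcing a net accounting. A secondary subtlety is that several switches (notably the merge) are only non-strict, reflecting the genuine non-uniqueness of the maximizer illustrated in Figure~\ref{fig:maxbin}; care is therefore needed to argue that $T_{\vect{L}}$ is \emph{a} terminal form of the process and that the reductions always terminate there, rather than at a distinct tree of equal security.
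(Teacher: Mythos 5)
Your overall plan mirrors the paper's proof---transform an arbitrary tree into $T_{\vect{L}}$ by switches that never decrease $R$---but the two steps carrying all the weight are not established. First, the building-block induction does not close: your inductive statement concerns only trees with a power-of-two number of leaves, yet splitting a tree with $2^n$ leaves at its root produces subtrees with $a$ and $2^n-a$ leaves, which are in general \emph{not} powers of two, so the inductive hypothesis gives no bound on the securities of the two parts. The only way to close this induction is to take as the inductive statement the exact maximum for \emph{every} leaf count, $f(\ell)=2\ell-\lfloor\log_2\ell\rfloor-s_2(\ell)-1$ (with $s_2(\ell)$ the number of ones in the binary expansion of $\ell$), and verify that every split $a+b=\ell$ satisfies $f(a)+f(b)+1+\min\{\lfloor\log_2 a\rfloor,\lfloor\log_2 b\rfloor\}\le f(\ell)$, which reduces to $\lfloor\log_2\ell\rfloor+s_2(\ell)\le\max\{\lfloor\log_2 a\rfloor,\lfloor\log_2 b\rfloor\}+s_2(a)+s_2(b)$. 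This arithmetic inequality is true (it follows from the fact that $s_2(a)+s_2(b)-s_2(a+b)$ equals the number of carries in the binary addition of $a$ and $b$, together with the observation that $\lfloor\log_2\ell\rfloor$ can exceed $\max\{\lfloor\log_2 a\rfloor,\lfloor\log_2 b\rfloor\}$ only when a carry occurs), but you neither state nor prove it. Ironically, carrying this out would yield a complete proof of the whole theorem that is genuinely different from, and shorter than, the paper's---achievability by $T_{\vect{L}}$ is just the computation in Theorem~\ref{theo:count}, and no structural switches would be needed at all; as written, however, this is a gap, not an alternative.

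Second, the heart of the matter---that any tree can be brought to a ``caterpillar of complete binary trees,'' and that two hanging copies of $C_n$ can then be merged into one $C_{n+1}$, all without decreasing $R$---is asserted rather than proved. These relocations are not local: detaching a complete subtree and reattaching it elsewhere changes the ranks of every vertex on both root-paths through the nested minima. This is precisely where the paper spends its effort: four switching lemmata (Lemmata~\ref{lem:sw1}--\ref{lem:sw4}), split by whether one parent is an ancestor of the other and by how the siblings' ranks compare. In Lemma~\ref{lem:sw3} the rank of the upper parent genuinely \emph{decreases} and must be offset against the gain at the lower parent, and in the residual case (Lemma~\ref{lem:sw4}) the naive swap fails altogether and a different surgery is required, in which the lower parent is carried along with its other child and spliced into the spine higher up. Your one-line justification of the merge (``$C_{n+1}$ has root-rank $n+1$, against the capped rank of the lower spine vertex'') presupposes the caterpillar form whose attainability you never argue, and your closing paragraph concedes that the ``net accounting'' for the merge is unresolved. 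So the proposal correctly identifies the useful invariants---the rank recursion, the $\log_2$ cap on ranks, monotonicity in ancestors---but leaves both pillars on which it rests unproven.
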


The proof of this theorem will depend on several other results which will be proved throughout the remainder of this section.

\subsection{Partition of a proper binary tree}
Before we can prove the optimality of $T_{\vect{L}}$, we first introduce a ``partition'' of any given proper binary tree on $\ell$ leaves that would allow us to compare them.

\begin{definition}
Given a proper binary tree $T$, we say a vertex $v$ is \emph{saturated} if $T(v)$ is a complete binary tree but $T(u)$ is not complete for any of $v$'s ancestors $u$. Let $v_{1}$, \ldots, $v_{j}$ be the saturated vertices in $T$. Recording $|L(T(v_{i}))| = 2^{m_i}$ for $1\leq i \leq j$, we have that the number of leaves $\ell$ in $T$, is given by
$$\ell = \sum_{i=1}^{j} 2^{m_i}, $$
where without loss of generality, $m_1 \geq m_2 \geq \cdots \geq m_j \geq 0$. We call $\vect{M}=(m_1, m_2, \ldots , m_j)$ the corresponding \emph{partition vector}. 
\end{definition}

Note that the proper binary tree corresponding to a given $\vect{M}$ is not necessarily unique, as shown in Figure~\ref{fig:tm}, where the saturated vertices are highlighted.

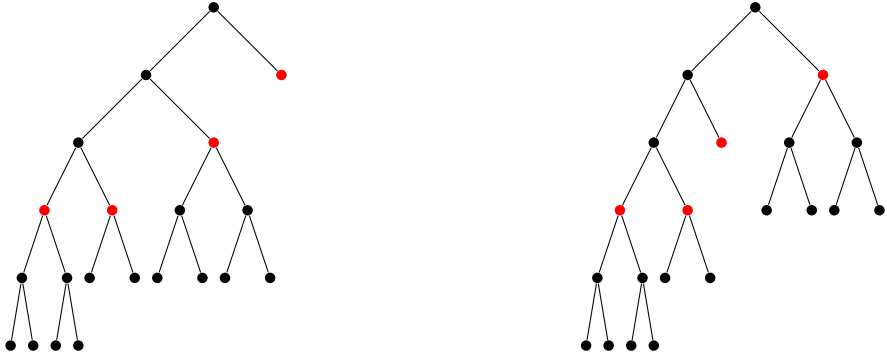
\begin{figure}[H]
\centering
    \begin{tikzpicture}
[nodes={fill=black,circle,inner sep=1.4pt}, sibling distance=3cm, scale=0.6]
		\node{}
		child { 
			node {} [sibling distance=3cm]
			child { node{} [sibling distance=1.5cm]
					child { node [fill=red] {}[sibling distance=1cm]
						 child { node {} [sibling distance=0.5cm]
						  	child { node {} }
						  	child { node {} }}
						 child { node {} [sibling distance=0.5cm]
						 	child { node {} }
					 		child { node {} } }}
					child { node [fill=red]{} [sibling distance=1cm]
						child { node {} }
						child { node {} }}}
			child { node [fill=red] {}[sibling distance=1.5cm]
					child { node {}[sibling distance=1cm]
						child { node {} }
						child { node {} } }
					child { node {} [sibling distance=1cm]
						child { node {} }
					    child { node {} }} }
		}
		child { node[fill=red] {}};
		
	\node at (12,0){}
		child { 
			node {} [sibling distance=1.5cm]
			child { node{} [sibling distance=1.5cm]
				child { node [fill=red] {}[sibling distance=1cm]
					child { node {} [sibling distance=0.5cm]
						child { node {} }
						child { node {} }}
					child { node {} [sibling distance=0.5cm]
						child { node {} }
						child { node {} } }}
				child { node [fill=red]{} [sibling distance=1cm]
					child { node {} }
					child { node {} }}}
			child { node [fill=red] {}  }
		}
		child { node[fill=red]{}{ [sibling distance=1.5cm]
			child { node {} [sibling distance=1cm]
					child{node{}}
					child{node{}}
					}
			child { node {} [sibling distance=1cm]
					child{node{}}
					child{node{}}}}
		 };

	\end{tikzpicture}
\caption{Two different proper binary trees corresponding to $\vect{M}=(2,2,1,0)$ with security 15 and 14, respectively.}\label{fig:tm}
\end{figure}

When a non-root vertex $v$ is saturated, we denote its parent by $v_0$ and its sibling by $v_1$ unless otherwise noted. Furthermore, in the tree $T_{\vect{L}}$, all saturated vertices have different ranks (corresponding to the different $n_i$).

\subsection{``Switching'' Lemmata}
In order to compare $T$ (corresponding to $\vect{M}$) and $T_{\vect{L}}$, we introduce some useful operations that allow us to transform from $T$ to $T_{\vect{L}}$ without ever decreasing security $R(\cdot)$, ultimately switching $\vect{M}$ (which may have repeated entries) to $\vect{L}$, where $\vect{L}$ has no repeated entries. These operations involve the application of lemmata that will be proved in this section.
\begin{lemma}\label{lem:sw1}
Let $T$ be a proper binary tree with saturated vertices $u$ and $w$ such that $R_T(u) = R_T(w)$. Let $u$ and $w$'s respective siblings be $u_1$, $w_1$, parents be $u_0$, $w_0$, with neither $u_0$ or $w_0$ an ancestor of the other. If $R_T(w_1) \geq R_T(u_1)$ and $T^* = T - uu_0 - w_1w_0 + uw_0 + w_1u_0$ (see Figure~\ref{fig:sw1}), then $R(T^*) \geq R(T)$.
\end{lemma}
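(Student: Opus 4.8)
The plan is to prove the stronger \emph{pointwise} statement that $R_{T^*}(v) \ge R_T(v)$ for every vertex $v$, from which $R(T^*) \ge R(T)$ follows at once by summing over $V(T)$. The only tool needed is the local recursion for rank in a proper binary tree: for an internal vertex $v$ with children $c_1,c_2$ one has $R_T(v) = 1 + \min\{R_T(c_1), R_T(c_2)\}$, and $R_T(v) = 0$ when $v$ is a leaf. Since the hypothesis that neither $u_0$ nor $w_0$ is an ancestor of the other forces $T(u_0)$ and $T(w_0)$ to be disjoint, the four subtrees $T(u), T(u_1), T(w), T(w_1)$ are pairwise disjoint and are merely relocated, not internally altered, by the switch. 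Hence every vertex strictly inside one of them keeps its rank, and the only ranks that can change are those of $u_0$, $w_0$, and their ancestors. Write $r := R_T(u) = R_T(w)$, $a := R_T(u_1)$ and $b := R_T(w_1)$, so that $b \ge a$ by hypothesis.

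I would first check the inequality at the two pivot vertices by direct computation. After the switch $u_0$ has children $u_1, w_1$ and $w_0$ has children $u, w$, so
\[
R_{T^*}(u_0) = 1 + \min\{a,b\} = 1 + a, \qquad R_{T^*}(w_0) = 1 + \min\{r,r\} = 1 + r,
\]
while $R_T(u_0) = 1 + \min\{r,a\}$ and $R_T(w_0) = 1 + \min\{r,b\}$. Because $b \ge a$ gives $\min\{a,b\} = a \ge \min\{r,a\}$, and because $\min\{r,b\} \le r$, both pivots satisfy $R_{T^*} \ge R_T$.

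I would then propagate this upward. The map $v \mapsto 1 + \min\{\cdot,\cdot\}$ is monotone non-decreasing in each child's rank, so arguing by induction along the two paths from $u_0$ and $w_0$ to the root---at each ancestor the off-path child lies in an unchanged subtree and keeps its rank---the domination at the children forces $R_{T^*}(v) \ge R_T(v)$ at every ancestor $v$, in particular at the lowest common ancestor of $u_0,w_0$ and everywhere above it. Combining this with the unchanged vertices gives $R_{T^*}(v) \ge R_T(v)$ for all $v$, and summing proves the lemma.

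I expect the main obstacle to be the framing rather than any single estimate. A naive approach would track the signed rank change vertex by vertex and hope that increases and decreases cancel; this becomes delicate at and above the lowest common ancestor, where the nearest leaf of an ancestor may switch from one branch to the other. Establishing vertexwise domination and invoking monotonicity of the min-plus recursion sidesteps this entirely. The remaining point requiring care is to confirm that the ``neither an ancestor of the other'' hypothesis genuinely makes the four relocated subtrees disjoint, which is what guarantees that their internal ranks are preserved and gives a valid base for the bottom-up induction.
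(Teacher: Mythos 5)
Your proof is correct and takes essentially the same approach as the paper's: show that the ranks of the two pivot vertices $u_0$ and $w_0$ do not decrease under the switch, and then argue that all other ranks are unchanged or non-decreasing. The paper handles the pivot step by a three-case analysis on the ordering of $R_T(u_1)$, $R_T(w_1)$, $R_T(u)=R_T(w)$ and dismisses the upward propagation as ``easy to see,'' whereas your unified computation $R_{T^*}(u_0)=1+\min\{a,b\}\ge 1+\min\{r,a\}$, $R_{T^*}(w_0)=1+r\ge 1+\min\{r,b\}$ and the explicit monotonicity induction along the paths to the root are a cleaner, more rigorous rendering of the same argument.
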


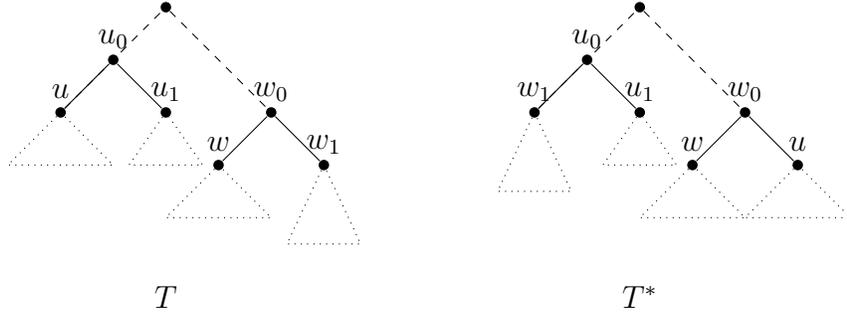
\begin{figure}[H]
\centering
    \begin{tikzpicture}[scale=.7]
        
        \node[fill=black,circle,inner sep=1.4pt] (t2) at (1,1) {};
        \node[fill=black,circle,inner sep=1.4pt] (t4) at (2,2) {};
        \node[fill=black,circle,inner sep=1.4pt] (t5) at (3,1) {};
		\node[fill=black,circle,inner sep=1.4pt] (t6) at (3,3) {};
       
        \node[fill=black,circle,inner sep=1.4pt] (t1) at (5,1) {};
        \node[fill=black,circle,inner sep=1.4pt] (t8) at (4,0) {};
        \node[fill=black,circle,inner sep=1.4pt] (t9) at (6,0) {};

        \draw [dashed] (t2)--(t6)--(t1);
        \draw (t2)--(t4);
        \draw [dotted] (t2)--(0,0)--(2,0)--(t2);
        \draw [dotted] (t5)--(2.3,0)--(3.7,0)--(t5);
        \draw (t8)--(t1)--(t9);
        \draw [dotted] (t8)--(3,-1)--(5,-1)--(t8);
        \draw [dotted] (t9)--(5.3,-1.5)--(6.7,-1.5)--(t9);
        \draw (t4)--(t5);
        
        \node at (2,2.4) {$u_0$};
        \node at (1,1.4) {$u$};
        \node at (3,1.4) {$u_1$};
        
        \node at (5,1.4) {$w_0$};
        \node at (4,.4) {$w$};
        \node at (6,.4) {$w_1$};
        
        \node at (3, -2.5) {$T$};

        \begin{scope}[shift={(+9,0)}]
            \node[fill=black,circle,inner sep=1.4pt] (t2) at (1,1) {};
            \node[fill=black,circle,inner sep=1.4pt] (t4) at (2,2) {};
            \node[fill=black,circle,inner sep=1.4pt] (t5) at (3,1) {};
			\node[fill=black,circle,inner sep=1.4pt] (t6) at (3,3) {};
       
            \node[fill=black,circle,inner sep=1.4pt] (t1) at (5,1) {};
            \node[fill=black,circle,inner sep=1.4pt] (t8) at (4,0) {};
            \node[fill=black,circle,inner sep=1.4pt] (t9) at (6,0) {};

            \draw [dashed] (t2)--(t6)--(t1);
            \draw (t2)--(t4);
            \draw [dotted] (t2)--(.3,-.5)--(1.7,-.5)--(t2);
            \draw [dotted] (t5)--(2.3,0)--(3.7,0)--(t5);
            \draw (t8)--(t1)--(t9);
            \draw [dotted] (t8)--(3,-1)--(5,-1)--(t8);
            \draw [dotted] (t9)--(5,-1)--(7,-1)--(t9);
            \draw (t4)--(t5);
            \node at (2,2.4) {$u_0$};
            \node at (1,1.4) {$w_1$};
            \node at (3,1.4) {$u_1$};
            
            \node at (5,1.4) {$w_0$};
            \node at (4,.4) {$w$};
            \node at (6,.4) {$u$};
            
            \node at (3, -2.5) {$T^*$};
        \end{scope}

    \end{tikzpicture}
    \caption{The trees $T$ (on the left) and $T^*$ (on the right) from Lemma \ref{lem:sw1}.}
    \label{fig:sw1}
\end{figure}

\begin{proof}
We consider three cases depending on the ranks of the vertices $u, w, u_1, w_1$. 
\begin{itemize}
\item If $R_T(u_1) \leq R_T(w_1) \leq R_T(u) = R_T(w)$, then
$$ R_T(u_0) = 1 + R_T(u_1)=R_{T^*}(u_0) $$
and 
$$ R_T(w_0) = 1 + R_T(w_1) \leq 1 + R_T(w) = R_{T^*}(w_0) .$$

\item If $R_T(u_1) \leq R_T(u) = R_T(w) \leq R_T(w_1) $, then
$$ R_T(u_0) = 1 + R_T(u_1)=R_{T^*}(u_0) $$
and 
$$ R_T(w_0) =  1 + R_T(w) = R_{T^*}(w_0) .$$

\item If $R_T(u) = R_T(w) \leq R_T(u_1) \leq R_T(w_1)$, then
$$ R_T(u_0) = 1 + R_T(u) \leq 1 + R_T(u_1) =R_{T^*}(u_0) $$
and 
$$ R_T(w_0) =  1 + R_T(w) = R_{T^*}(w_0) .$$
\end{itemize}

In each case, it is easy to see that the ranks of all other vertices either stay the same or increase from $T$ to $T^*$. Thus, $R(T^*) \geq R(T)$.
\end{proof}


\begin{lemma}\label{lem:sw2}
Let $T$ be a proper binary tree with saturated vertices $u$ and $w$ such that $R_T(u) = R_T(w)$. Let $u$ and $w$'s respective siblings be $u_1$, $w_1$, parents be $u_0$, $w_0$, and assume that $u_0$ is an ancestor of $w_0$. If $R_T(w_1) \geq R_T(u)=R_T(w)$ and $T^* = T - uu_0 - w_1w_0 + uw_0 + w_1u_0$ (see Figure~\ref{fig:sw2}), then $R(T^*) \geq R(T)$.
\end{lemma}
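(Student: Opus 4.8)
The plan is to mirror the structure of Lemma~\ref{lem:sw1}'s proof, but to handle the new complication that $u_0$ is now an ancestor of $w_0$, so that the subtree hanging at $w_0$ sits \emph{inside} the subtree hanging at $u_0$. The same swap $T^* = T - uu_0 - w_1w_0 + uw_0 + w_1u_0$ moves the saturated vertex $u$ down to be $w_0$'s child and promotes $w_1$ up to be $u_0$'s child. First I would establish that, as in the previous lemma, the ranks of $u$, $w$, $u_1$, and $w_1$ themselves are unchanged by the swap, since the operation only re-attaches these whole subtrees without altering their internal structure; the only vertices whose ranks can possibly change are $u_0$, $w_0$, and the vertices on the path strictly between them (the ancestors of $w_0$ that are proper descendants of $u_0$), together with $u_0$'s own ancestors.

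The core computation is to compare $R_T(u_0)+R_T(w_0)$ with $R_{T^*}(u_0)+R_{T^*}(w_0)$. In $T$, we have $R_T(w_0) = 1 + \min\{R_T(w), R_T(w_1)\} = 1 + R_T(w)$ using the hypothesis $R_T(w_1)\ge R_T(w)$; and $R_T(u_0) = 1 + \min\{R_T(u), R_T(u_1)\}$. In $T^*$, the child $u$ of $w_0$ has the same rank as the old child $w$ (both are saturated of equal rank $R_T(u)=R_T(w)$), so I expect $R_{T^*}(w_0) = 1 + R_T(w)$ to be unchanged, while at $u_0$ the child $w$ is replaced by $w_1$, whose rank is at least $R_T(w)=R_T(u)$, so $R_{T^*}(u_0) = 1 + \min\{R_T(w_1), R_T(u_1)\} \ge R_{T^*}(u_0)$ compared against the $T$ value. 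I would verify case-by-case (splitting on the relative order of $R_T(u_1)$ and $R_T(u)=R_T(w)=R_T(w_1)$-bound) that neither $R_{T^*}(u_0)$ nor $R_{T^*}(w_0)$ drops below its $T$-counterpart.

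The main obstacle, and the genuinely new feature relative to Lemma~\ref{lem:sw1}, is the chain of intermediate vertices strictly between $u_0$ and $w_0$: because $w_0$ is a descendant of $u_0$, changing the subtree rooted at $w_0$ can propagate rank changes upward along the entire ancestral path from $w_0$ up to $u_0$ and beyond. I would argue that every such intermediate vertex $x$ has $R_T(x) = 1 + \min$ over its two children, and that the only child-subtree affected is the one containing $w_0$, whose root-rank (namely $R_\cdot(w_0)$) I will have shown does not decrease. Hence by a monotonicity/induction argument walking up the path, each intermediate rank, and each ancestor rank of $u_0$, either stays the same or increases. This is precisely the step to treat carefully, since it is where the ``ancestor'' hypothesis does real work.

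Combining the three pieces—unchanged ranks at $u,w,u_1,w_1$; the $u_0$, $w_0$ comparison; and the non-decreasing propagation along the path between and above them—yields $R(T^*) \ge R(T)$, completing the proof. I expect the write-up to close with the same phrasing as Lemma~\ref{lem:sw1}: the ranks of all other vertices either stay the same or increase, so $R(T^*)\ge R(T)$.
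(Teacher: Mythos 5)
Your proposal is correct and follows essentially the same route as the paper's proof: show $R_{T^*}(w_0) = 1 + R_T(w) = R_T(w_0)$, deduce that $u_1$'s rank is unchanged, conclude
$R_{T^*}(u_0) = \min\{1+R_T(w_1),\, 1+R_{T^*}(u_1)\} \geq \min\{1+R_T(u),\, 1+R_T(u_1)\} = R_T(u_0)$,
and observe that all remaining ranks stay the same or increase. Your third paragraph actually spells out the upward propagation in more detail than the paper does.

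One imprecision should be fixed in the write-up. In your first paragraph you justify $R_{T^*}(u_1) = R_T(u_1)$ by saying the swap ``only re-attaches these whole subtrees without altering their internal structure.'' That is true for $u$, $w$, and $w_1$, but false for $u_1$: since $w$ is saturated, it cannot lie inside the complete subtree $T(u)$, so $w_0$ must lie in $T(u_1)$ --- that is, $u_1$ is itself on the path from $u_0$ down to $w_0$ (possibly $u_1 = w_0$), and $T(u_1)$ is precisely the subtree being altered. This also makes your first paragraph internally inconsistent, since $u_1$ is one of the ``vertices on the path strictly between them'' that you say may change. The invariance of $u_1$'s rank is therefore not automatic; it follows from your own propagation argument, using the fact that $R_{T^*}(w_0)$ \emph{equals} (not merely is at least) $R_T(w_0)$ and that all off-path subtrees along the way are untouched. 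So the logical order of the write-up should be: first $w_0$, then propagate up the path to $u_1$, and only then the comparison at $u_0$ (where, incidentally, it is the child $u$, not $w$, that is replaced by $w_1$, and your displayed inequality should read $R_{T^*}(u_0) \geq R_T(u_0)$). With that reordering, your argument is exactly the paper's.
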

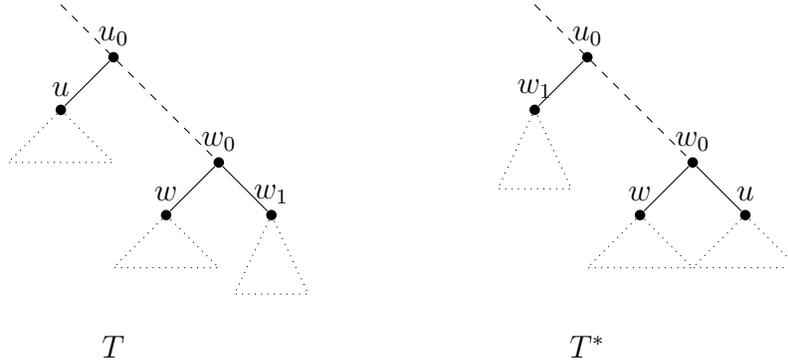
\begin{figure}[H]
\centering
    \begin{tikzpicture}[scale=.7]
        
        \node[fill=black,circle,inner sep=1.4pt] (t2) at (2,2) {};
        \node[fill=black,circle,inner sep=1.4pt] (t4) at (3,3) {};

        \node[fill=black,circle,inner sep=1.4pt] (t1) at (5,1) {};
        \node[fill=black,circle,inner sep=1.4pt] (t8) at (4,0) {};
        \node[fill=black,circle,inner sep=1.4pt] (t9) at (6,0) {};

        \draw [dashed] (2,4)--(t1);
        \draw (t2)--(t4);
        \draw [dotted] (t2)--(1,1)--(3,1)--(t2);

        \draw (t8)--(t1)--(t9);
        \draw [dotted] (t8)--(3,-1)--(5,-1)--(t8);
        \draw [dotted] (t9)--(5.3,-1.5)--(6.7,-1.5)--(t9);

        \node at (3,3.4) {$u_0$};
        \node at (2,2.4) {$u$};

        \node at (5,1.4) {$w_0$};
        \node at (4,.4) {$w$};
        \node at (6,.4) {$w_1$};
        
        \node at (3, -2.5) {$T$};
        
        \begin{scope}[shift={(+9,0)}]
            \node[fill=black,circle,inner sep=1.4pt] (t2) at (2,2) {};
            \node[fill=black,circle,inner sep=1.4pt] (t4) at (3,3) {};

            \node[fill=black,circle,inner sep=1.4pt] (t1) at (5,1) {};
            \node[fill=black,circle,inner sep=1.4pt] (t8) at (4,0) {};
            \node[fill=black,circle,inner sep=1.4pt] (t9) at (6,0) {};

            \draw [dashed] (2,4)--(t1);
            \draw (t2)--(t4);
            \draw [dotted] (t2)--(1.3,.5)--(2.7,.5)--(t2);

            \draw (t8)--(t1)--(t9);
            \draw [dotted] (t8)--(3,-1)--(5,-1)--(t8);
            \draw [dotted] (t9)--(5,-1)--(7,-1)--(t9);

            \node at (3,3.4) {$u_0$};
            \node at (2,2.4) {$w_1$};

            \node at (5,1.4) {$w_0$};
            \node at (4,.4) {$w$};
            \node at (6,.4) {$u$};
            
            \node at (3, -2.5) {$T^*$};
        \end{scope}
        \end{tikzpicture}
    \caption{The trees $T$ (on the left) and $T^*$ (on the right) from Lemma \ref{lem:sw2}.}\label{fig:sw2}
\end{figure}
\begin{proof}
As was done in the proof of Lemma~\ref{lem:sw1}, note that
$$ R_T(w_0) =  1 + R_T(w) = R_{T^*}(w_0)\,, $$
and since $R_T(w_1) \geq R_T(u)$ and $R_{T^*}(u_1)=R_T(u_1)$, we obtain
\begin{align*}
R_{T^*}(u_0) & = \min\{1 + R_{T}(w_1), 1 + R_{T^*}(u_1)\} \\
& \geq \min\{1+R_T(u), 1 + R_{T^*}(u_1)\} = \min\{1+R_T(u), 1 + R_T(u_1)\} = R_T(u_0)\,.
\end{align*}
Since the ranks of all other vertices stay the same or increase from $T$ to $T^*$, we conclude that $R(T^*) \geq R(T)$.
\end{proof}


\begin{lemma}\label{lem:sw3}
Let $T$ be a proper binary tree with saturated vertices $u$ and $w$ such that $R_T(u) = R_T(w)$. Let $u$ and $w$'s respective siblings be $u_1$, $w_1$, parents be $u_0$, $w_0$, and assume that $u_0$ is an ancestor of $w_0$. Suppose that $R_T(w_1) \leq R_T(u)-1 = R_T(w)-1$. Further suppose that either $u_0$ is the root, or its parent $v_1$ satisfies $R_T(v_1) \leq 2 + R_T(w_1)$. Let $T^* = T - uu_0 - w_1w_0 + uw_0 + w_1u_0$  (see Figures~\ref{fig:sw2}~and~\ref{fig:sw3}).
Then we have $R(T^*) \geq R(T).$  
\end{lemma}
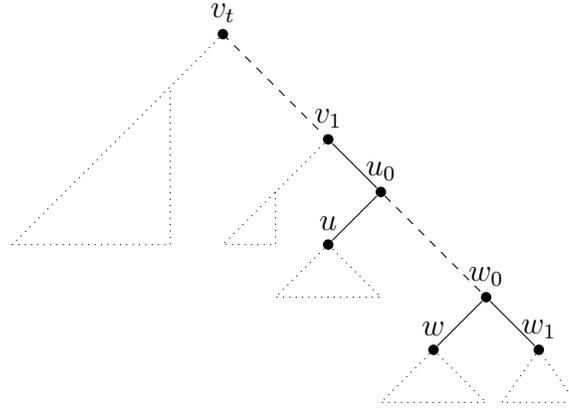
\begin{figure}[H]
\centering
    \begin{tikzpicture}[scale=.7]
        
        \node[fill=black,circle,inner sep=1.4pt] (t2) at (2,2) {};
        \node[fill=black,circle,inner sep=1.4pt] (t4) at (3,3) {};

        \node[fill=black,circle,inner sep=1.4pt] (t1) at (5,1) {};
        \node[fill=black,circle,inner sep=1.4pt] (t8) at (4,0) {};
        \node[fill=black,circle,inner sep=1.4pt] (t9) at (6,0) {};

        \node[fill=black,circle,inner sep=1.4pt] (v1) at (2,4) {};
        \node[fill=black,circle,inner sep=1.4pt] (vt) at (0,6) {};

        \draw [dashed] (0,6)--(v1);
        \draw (v1)--(t4);
        \draw [dashed] (t4)--(t1);
        
        \draw (t2)--(t4);
        \draw [dotted] (t2)--(1,1)--(3,1)--(t2);

        \draw (t8)--(t1)--(t9);
        \draw [dotted] (t8)--(3,-1)--(5,-1)--(t8);
        \draw [dotted] (t9)--(5.3,-1)--(6.7,-1)--(t9);

        \draw [dotted] (v1)--(0,2)--(1,2)--(1,3);
        \draw [dotted] (vt)--(-4,2)--(-1,2)--(-1,5);

        \node at (3,3.4) {$u_0$};
        \node at (2,2.4) {$u$};
        
        \node at (2,4.4) {$v_1$};
        \node at (0,6.4) {$v_t$};
        
        \node at (5,1.4) {$w_0$};
        \node at (4,.4) {$w$};
        \node at (6,.4) {$w_1$};

        \end{tikzpicture}
\caption{The tree $T$ with root $v_t$, with $u_0$ an ancestor of $w_0$ from Lemma~\ref{lem:sw3}.}\label{fig:sw3}
\end{figure}

Note that a difference between this case and that of Lemma~\ref{lem:sw2} is that the rank of $u_0$ may decrease from $T$ to $T^*$, which is compensated for by increasing the rank of $w_0$. 

\begin{proof}
Assume $u_0$ is not the root of $T$. We prove the lemma through a series of claims.

\texttt{Claim 1:} If $R_{T^*}(v_1) < R_T (v_1)$, then $R_{T^*}(u_0)< R_T (u_0)$.

\texttt{Proof 1:} Let $v'_{1}$ be the sibling of $u_0$. By
$R_T(v_1)= \min\{1+R_T(v'_1), 1+R_T(u_0)\}$ and $R_{T^*}(v_1)= \min \{1+R_T(v'_1), 1+R_{T^*}(u_0)\}$,
the assumption $R_{T^*} (v_1) < R_T (v_1)$ implies that $R_{T^*}(u_0) < R_T (u_0)$.

\texttt{Claim 2:} $R_T(u_1) \leq R_{T^*}(u_1)$.

\texttt{Proof 2:} Since $T^{\ast}$ is obtained from $T$ by ``exchanging'' the subtrees $T(u)$ and $T(w_1)$, the assumptions $R_T(u)=R_T(w)$ and $R_T(w_1)< R_T(u)$ imply that $R_T(u_1) \leq R_{T^*}(u_1)$.

\texttt{Claim 3:} If $R_{T^*}(u_0) < R_T (u_0)$, then $R_{T^*}(u_0) \neq 1+R_{T^*}(u_1)$. In particular, $R_{T^*}(u_0) =1+R_T(w_1)$.

\texttt{Proof 3:} We have $R_T(u_0) = \min\{1+R_T(u), 1 + R_T(u_1)\}$, $R_{T^*}(u_0)= \min \{1+R_T(w_1), 1 + R_{T^*}(u_1)\}$.
Assume $R_T^*(u_0) < R_T (u_0)$ and suppose (for contradiction) that $R_{T^*}(u_0) = 1+R_{T^*}(u_1)$. Then $R_{T^*}(u_1) \leq R_T(w_1)$. Using Claim~2 together with our assumption $R_T(w_1) < R_T(u)$, we deduce that $R_T(u_1) \leq R_{T^*}(u_1) < R_T(u)$. Therefore, $R_{T^*} (u_0) < R_T (u_0)$ implies that $R_{T^*}(u_1) < \min \{R_T(u), R_T(u_1) \} = R_T(u_1) \leq R_{T^*}(u_1)$, which is a contradiction. Hence $R_{T^*}(u_0) \neq 1+R_{T^*}(u_1)$. In particular, $R_{T^*}(u_0) =1+R_T(w_1)$.

\texttt{Claim 4:} Let $v_1'$ be the sibling of $u_0$. If $R_{T^*} (v_1) < R_T (v_1)$, then $R_T (v_1') \geq 2+ R_T (w_1)$. and $R_T (u_0) \geq 2+ R_T (w_1)$.

\texttt{Proof 4:} Assume $R_{T^*} (v_1) < R_T (v_1)$ and suppose to the contrary that $R_T (v_1') \leq 1+ R_T (w_1)$. Since $R_T(w_1) < R_T(u)$, we deduce that $R_T (v_1') < 1 + R_T (u)$. Thus, the minimum  distance in $T$ from $v_1$ to a leaf descendant cannot be attained by a vertex of $T(u)$, while the minimum  distance in $T^*$ from $v_1$ to a leaf descendant can be attained at a vertex not belonging to $T(w_1)$. It follows from $R_T(w_1) < R_T(u)=R_T(w)$ and the constructive definition of $T^*$ that $R_T(v_1) \leq R_T^* (v_1)$, a contradiction. The second part of the claim can be proved in a similar way, since $v'_1$ and $u_0$ are siblings. 

\texttt{Claim 5:} If $R_T(v_1) \leq 2+ R_T(w_1)$, then $R_{T^*}(v_1) \geq R_T(v_1)$.

\texttt{Proof 5:} Assume $R_T (v_1) \leq 2 + R_T (w_1)$ and suppose to the contrary that $R_{T^*}(v_1) < R_T (v_1)$. Claim~4 implies both $R_T (v_1') \geq 2+ R_T (w_1)$ and $R_T (u_0) \geq 2+ R_T (w_1)$, which in turn, implies that $R_T (v_1) \geq 3+ R_T (w_1)$, a contradiction.

\medskip
We continue with the proof of the lemma. By Claim~5, $R_{T^*}(v_1) \geq R_T(v_1)$ and this also implies that $R_{T^*}(v_i) \geq R_T(v_i)$ for all $i\in \{1,2,\ldots,t\}$. Note that
\begin{align*}
    R_T(w_0)&=1+R_T(w_1),\\
    R_T(u_0)&=\min\{1+R_T(u),1+R_T(u_1)\},
\end{align*}
and
\begin{align*}
    R_{T^*}(w_0)&=1+R_T(u),\\
    R_{T^*}(u_0)&=\min\{1+R_T(w_1),1+R_{T^*}(u_1)\}.
\end{align*}
We can see that the rank of $w_0$ increases from $T$ to $ T^*$ with an amount of $R_T(u)-R_T(w_1)$. On the other hand, the rank of $u_0$ may decrease with an amount of $\min\{1+R_T(u),1+R_T(u_1)\}-\min\{1+R_T(w_1),1+R_{T^*}(u_1)\}$. However, following the same argument as in the previous lemmata, we have $1+R_T(u_1) \leq 1+R_{T^*}(u_1)$. Since $R_T(w_1) < R_T(u)$, this implies that
\begin{align*}
 & \min\{1+R_T(u),1+R_T(u_1)\}-\min\{1+R_T(w_1),1+R_{T^*}(u_1)\}\\
 &\qquad \leq\min\{1+R_T(u),1+R_{T^*}(u_1)\}-\min\{1+R_T(w_1),1+R_{T^*}(u_1)\}\\
 &\qquad \leq R_T(u)-R_T(w_1).
\end{align*}

Hence, the decrease of the rank of $u_0$ from $T$ to $T^*$ is compensated by the increase of the rank of $w_0$. 
\end{proof}

\medskip
We denote by $u_0 v_1 v_2 \ldots v_t$ the path from $u_0$ to the root, provided $u_0$ is not the root. With the assumptions in Lemma~\ref{lem:sw3}, $R_T(v_1) \leq 2 + R_T(w_1)$ implies that the rank of each of the vertices $v_1, v_2, \cdots, v_t$ either increases or stays the same from $T$ to $T^*$ (if such vertices exist, i.e. if $u_0$ is not the root of $T$). It remains to rule out the case where $R_T(v_1)> 2+ R_T(w_1)$. This will be done as a special case in the next lemma.


\begin{lemma}\label{lem:sw4}
Let $T$ be a proper binary tree with saturated vertices $u$ and $w$ such that $R_T (u) = R_T (w)$. Let $u$ and $w's$ respective siblings be $u_1$, $w_1$, parents be $u_0$, $w_0$, and assume that $u_0$ is an ancestor of $w_0$. Let $w_p$ be the parent of $w_0$. Suppose that $R_T (w_1) < R_T (u) = R_T (w)$ and that $u_0$ is not the root of $T$, while $u_0v_1v_2 \cdots v_t$, $t \in \mathbb{N}$, is the path in $T$ from $u_0$ to the root $v_t$ of $T$. Also, suppose that $R_T (v_1) > R_T (w_1)$.
\begin{itemize}
\item[(1)] If $R_T (v_i) > R_T (w_1)$ does not hold for all $1 \leq i \leq t$, let $s + 1 \in \{2, 3, \cdots , t\}$ be minimal such that $R_T (v_{s+1}) \leq R_T (w_1)$. Then $R(T ^{\ast}) \geq R(T )$, where $T^{\ast} = T - w_0w - w_pw_0 - v_{s+1}v_s + v_{s+1}w_0 + v_sw_0 + w_pw$ (see Figure~\ref{fig:sw4}).
\item[(2)] If $R_T (v_i) > R_T (w_1)$ for all $1 \leq i \leq t$, then $R(T ^{\ast}) \geq R(T )$,
where $T ^{\ast} = T - w_0w - w_pw_0 + w_0v_t + w_pw$.
\end{itemize}
\end{lemma}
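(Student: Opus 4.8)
The plan is to prove both parts by the same device used in Lemmata~\ref{lem:sw1}--\ref{lem:sw3}: show that \emph{no} vertex loses rank when passing from $T$ to $T^\ast$, so that $R(T^\ast)=\sum_v R_{T^\ast}(v)\ge\sum_v R_T(v)=R(T)$, the sums being over the common vertex set. First I would observe that the only vertices whose rank can change are $w_0$ together with its proper ancestors, which are precisely the vertices $w_p,\dots,v_t$ on the path from $w_0$ to the root: for every other vertex $x$ the subtree $T(x)$ is carried over unchanged by the edge surgery (in particular $T(u)$, $T(w)$, $T(w_1)$ and all subtrees hanging off this path are preserved), so $R_{T^\ast}(x)=R_T(x)$. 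Throughout I would invoke the monotonicity principle that if a vertex's child on the path has non-decreasing rank and its off-path child is unchanged, then the vertex's rank $1+\min\{\cdot,\cdot\}$ does not decrease.

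I would handle part~(2) first, as it is cleaner. There the surgery makes $w_0$ the new root, with children $w_1$ and the old root $v_t$, and splices $w$ into the position vacated by $w_0$ below $w_p$. Since $R_T(w_1)<R_T(w)$ forces $R_T(w)\ge 1+R_T(w_1)=R_T(w_0)$, replacing $w_0$ by $w$ does not decrease the rank of $w_p$, and by the monotonicity principle this non-decrease propagates up the whole path, so each of $w_p,\dots,v_t$ has $R_{T^\ast}\ge R_T$. It then remains only to check the new root: using $R_{T^\ast}(v_t)\ge R_T(v_t)>R_T(w_1)$ (the last inequality being the standing hypothesis $R_T(v_i)>R_T(w_1)$ for all $i$), I get $R_{T^\ast}(w_0)=1+\min\{R_{T^\ast}(v_t),R_T(w_1)\}=1+R_T(w_1)=R_T(w_0)$, so even $w_0$ keeps its rank. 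Hence $R(T^\ast)\ge R(T)$.

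For part~(1) the template is identical, except that $w_0$ is reinserted between $v_s$ and $v_{s+1}$ (its children in $T^\ast$ becoming $w_1$ and $v_s$) rather than at the root. Replacing $w_0$ by $w$ again makes $R_{T^\ast}\ge R_T$ for every vertex from $w_p$ up to $v_s$. The decisive computation is at $v_{s+1}$. Let $\sigma$ be the off-path child of $v_{s+1}$. Minimality of $s+1$ gives $R_T(v_i)>R_T(w_1)$ for $i\le s$ and $R_T(v_{s+1})\le R_T(w_1)$; since $R_T(v_s)>R_T(w_1)$, the value $R_T(v_{s+1})=1+\min\{R_T(v_s),R_T(\sigma)\}$ must be attained at $\sigma$, so $R_T(\sigma)<R_T(w_1)<1+R_T(w_1)=R_T(w_0)$. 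Consequently $R_{T^\ast}(w_0)=1+\min\{R_{T^\ast}(v_s),R_T(w_1)\}=1+R_T(w_1)=R_T(w_0)$ is unchanged, and then $R_{T^\ast}(v_{s+1})=1+\min\{R_T(w_0),R_T(\sigma)\}=1+R_T(\sigma)=R_T(v_{s+1})$ as well. Since $v_{s+1}$ keeps its rank, the monotonicity principle shows $v_{s+1},\dots,v_t$ are all unchanged; together with the non-decreasing ranks below and $w_0$ unchanged, every vertex has $R_{T^\ast}\ge R_T$, giving $R(T^\ast)\ge R(T)$.

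I expect the main obstacle to be exactly this analysis at the splice vertex $v_{s+1}$ in part~(1): one must argue that the bottleneck governing $R_T(v_{s+1})$ is the off-path sibling $\sigma$ rather than the subtree below it, and that threading the low-rank vertex $w_0$ (of rank $1+R_T(w_1)$) back onto the path introduces no smaller bottleneck---this is where the minimality of $s+1$ and the strict inequality $R_T(\sigma)<R_T(w_1)$ do the real work. A secondary verification, routine but necessary for the rank-sum comparison to be meaningful, is that each surgery yields a genuine proper binary tree of the same order: every internal vertex still has exactly two children, the old root $v_t$ becomes an ordinary internal vertex, and the number of leaves is unchanged.
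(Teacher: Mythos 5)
Your proof is correct and takes essentially the same approach as the paper's: verify that the surgery never decreases any vertex's rank, with ranks non-decreasing on the path from $w_p$ up to the splice point, the rank of $w_0$ preserved at $1+R_T(w_1)$, and the ranks at and above $v_{s+1}$ (resp.\ at the relocated root in case (2)) unchanged. Your explicit computation at $v_{s+1}$ via the off-path child $\sigma$ (showing $R_T(\sigma)<R_T(w_1)<R_T(w_0)$, so the bottleneck there is unaffected) is exactly the detail the paper's terser proof leaves implicit.
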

\begin{figure}[H]
\centering
    \begin{tikzpicture}[scale=.8]
        
        \node[fill=black,circle,inner sep=1.4pt] (t2) at (2,2) {};
        \node[fill=black,circle,inner sep=1.4pt] (t4) at (3,3) {};

        \node[fill=black,circle,inner sep=1.4pt] (t1p) at (4.25,1.75) {};
        \node[fill=black,circle,inner sep=1.4pt] (t1) at (5,1) {};
        
        \node[fill=black,circle,inner sep=1.4pt] (v1) at (2,4) {};
        \node[fill=black,circle,inner sep=1.4pt] (vt) at (0,6) {};

        \node[fill=black,circle,inner sep=1.4pt] (vx) at (1.5,4.5) {};
        \node[fill=black,circle,inner sep=1.4pt] (vx') at (0.5,5.5) {};

        \node[fill=black,circle,inner sep=1.4pt] (t8) at (4,0) {};
        \node[fill=black,circle,inner sep=1.4pt] (t9) at (6,0) {};

        \draw [dashed] (0,6)--(t4);
        \draw (t4)--(v1);
        \draw [dashed] (v1)--(t1);
        \draw (t2)--(t4);
        \draw [dotted] (t2)--(1,1)--(3,1)--(t2);
        \draw (t1p)--(t1);

        \draw [dotted] (v1)--(0,2)--(1,2)--(1,3);
        \draw [dotted] (vt)--(-2,4)--(-1,4)--(-1,5);
        \draw (vx)--(vx');

        \draw (t8)--(t1)--(t9);
        \draw [dotted] (t8)--(3,-1)--(5,-1)--(t8);
        \draw [dotted] (t1p)--(2.25,1.75-2)--(3.25,1.75-2)--(3.25,0.75);
        \draw [dotted] (t9)--(5.3,-1)--(6.7,-1)--(t9);
        
        \node at (3.3,3.2) {$u_0$};
        \node at (2,2.4) {$u$};
        
        \node at (2.2,4.2) {$v_1$};
        \node at (0,6.4) {$v_t$};
        
        \node at (1.8,4.7) {$v_s$};
        \node at (0.8,5.8) {$v_{s+1}$};
        
        \node at (4.5,2) {$w_p$};
        \node at (5.2,1.3) {$w_0$};
        \node at (4,.4) {$w$};
        \node at (6,.4) {$w_1$};
        
    \end{tikzpicture}
    \qquad
    \raisebox{2em}{
    \begin{tikzpicture}[scale=.8]
        
        \node[fill=black,circle,inner sep=1.4pt] (t2) at (2,2) {};
        \node[fill=black,circle,inner sep=1.4pt] (t4) at (3,3) {};

        \node[fill=black,circle,inner sep=1.4pt] (t1p) at (4.25,1.75) {};
        \node[fill=black,circle,inner sep=1.4pt] (t1) at (5,1) {};
        
        \node[fill=black,circle,inner sep=1.4pt] (v1) at (2,4) {};
        \node[fill=black,circle,inner sep=1.4pt] (vt) at (0,6) {};

        \node[fill=black,circle,inner sep=1.4pt] (vx) at (1.5,4.5) {};
        \node[fill=black,circle,inner sep=1.4pt] (vx') at (0.5,5.5) {};
        \node[fill=black,circle,inner sep=1.4pt] (w0) at (1,5) {};
        \node[fill=black,circle,inner sep=1.4pt] (w1) at (0,4) {};

        \draw [dashed] (0,6)--(t4);
        \draw (t4)--(v1);
        \draw [dashed] (v1)--(t1);
        \draw (t2)--(t4);
        \draw [dotted] (t2)--(1,1)--(3,1)--(t2);
        \draw (t1p)--(t1);

        \draw [dotted] (t1)--(4,0)--(6,0)--(t1);
        \draw [dotted] (w1)--(-.7,3)--(.7,3)--(w1);

        \draw [dotted] (v1)--(0,2)--(1,2)--(1,3);
        \draw [dotted] (t1p)--(2.25,1.75-2)--(3.25,1.75-2)--(3.25,0.75);
        \draw [dotted] (vt)--(-2,4)--(-1,4)--(-1,5);
        \draw (vx)--(vx');
        \draw (w0)--(w1);
        
        \node at (3.3,3.2) {$u_0$};
        \node at (2,2.4) {$u$};
        
        \node at (2.2,4.2) {$v_1$};
        \node at (0,6.4) {$v_t$};
        
        \node at (1.8,4.7) {$v_s$};
        \node at (0.8,5.8) {$v_{s+1}$};
        \node at (1.3,5.2) {$w_0$};
        
        \node at (4.5,2) {$w_p$};
        \node at (5.2,1.3) {$w$};
        \node at (0,4.4) {$w_1$};
        
    \end{tikzpicture}}
    \caption{The tree $T$ on the left, and the tree it generates, $T^*$, on the right from Lemma \ref{lem:sw4}.}
    \label{fig:sw4}
\end{figure}
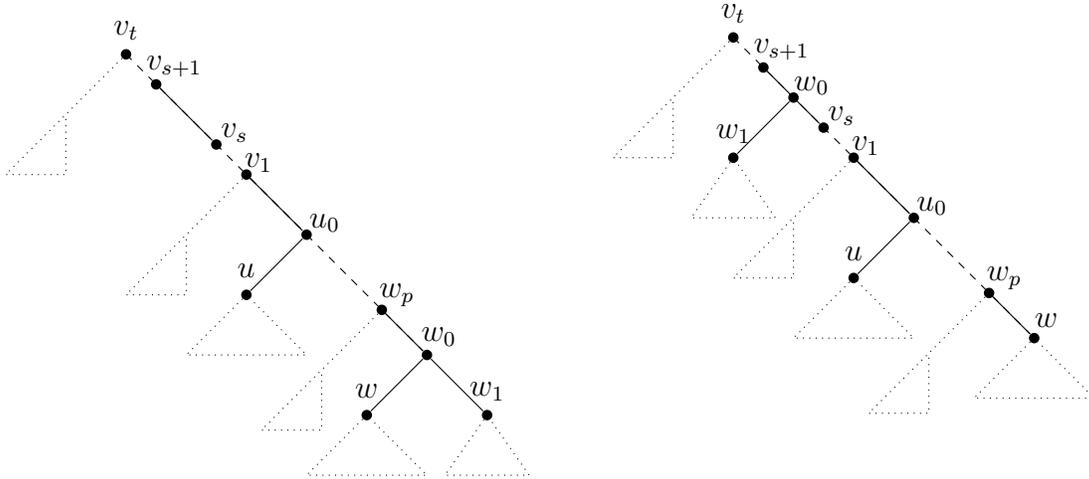
\begin{proof}
Note that the ranks of $w$, $w_1$ and also $w_0$ (since $R_T (v_1) > R_T (w_1)$) stay the same from $T$ to $T^*$. Since $R_T(w_1) \leq R_T(u)-1=R_T(w)-1$, using a similar argument as in the previous lemmata, the ranks of vertices on the path (1) from $v_s$ to $w$ or (2) from $v_t$ to $w$ stay the same or increase from $T$ to $T^*$. Moreover, since in (1) we have that $R_T(v_{s+1}) \leq R_T(w_1)$, the ranks of vertices on the path from $v_t$ to $v_{s+1}$ in this case remain unchanged. Finally, the ranks of all other vertices are unchanged from $T$ to $T^*$.
\end{proof}

\medskip
These lemmata can be summarized as follows:
\begin{itemize}
    \item Lemma~\ref{lem:sw1}: For pairs of saturated vertices of equal rank with neither parent of one vertex being an ancestor of the other vertex, the vertex whose sibling has a lower (or equal) rank can be swapped with the sibling of higher rank without decreasing the security of the tree.
    \item Lemma~\ref{lem:sw2}: For pairs of saturated vertices of equal rank, where the parent of one vertex is an ancestor of the other vertex, if the vertex with the ancestor-parent has lower (or equal) rank than the sibling of the vertex with the descendant-parent, then this sibling and the vertex with the ancestor-parent can be swapped without decreasing the security of the tree.

  \item Lemma~\ref{lem:sw3}: For pairs of saturated vertices of equal rank with one parent being an ancestor of the other, if the vertex with the descendant-parent has strictly higher rank than its sibling, and the neighbor of the ancestor-parent (if it exists) on the path from it to the root has at most two more rank than the sibling of the vertex with the descendant-parent, then this sibling and the vertex with the ancestor-parent can be swapped without decreasing the security of the tree.
  
    \item Lemma~\ref{lem:sw4}: For pairs of saturated vertices of equal rank with one’s parent an ancestor of the other, if the vertex with the descendant-parent has strictly higher rank than its sibling, and this sibling has strictly lower rank than the neighbor of the ancestor-parent (assumed it exists) on the path $P$ from it to the root, then depending on whether this sibling also has strictly lower rank than all other vertices on $P$ or not, this sibling can be moved to just above the ``appropriate'' consecutive vertices of $P$ (see Figure~\ref{fig:sw4}) without decreasing the security of the tree.

\end{itemize}

Note that the four lemmas cover all possible cases. Firstly, given a pair of saturated vertices with equal rank, there is the dichotomy that neither parent of one vertex is an ancestor of the other (Lemma~\ref{lem:sw1}) as opposed to one being an ancestor of the other in the other lemmata. Secondly, the case where the vertex with the ancestor-parent has lower (or equal) rank than the sibling of the vertex with the descendant-parent (Lemma~\ref{lem:sw2}) or when this does not occur (Lemmata~\ref{lem:sw3} and~\ref{lem:sw4}). The final case involves whether or not the ranks of vertices in the path between the ancestor-parent vertex and the root are lower (Lemma~\ref{lem:sw3}) or higher (Lemma~\ref{lem:sw4}) than the relevant sibling vertex. 

 

\subsection{Proof of Theorem~\ref{theo:tl}}

We are now ready to establish the optimality of $T_{\vect{L}}$. To do this, we firstly mention the maximality of $\vect{M}$ with unique entries. 

\begin{claim}
For any proper binary tree $T$ with $\ell$ leaves that corresponds to the partition vector $\vect{M}$, if $\vect{M}$ is not equal to $\vect{L}$, there must be repeated entries in $\vect{M}$. 
\end{claim}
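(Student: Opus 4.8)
The plan is to prove the contrapositive: assuming $\vect{M} = (m_1, m_2, \ldots, m_j)$ has no repeated entries, I would show that $\vect{M}$ must coincide with $\vect{L}$. Since the entries of a partition vector satisfy $m_1 \geq m_2 \geq \cdots \geq m_j \geq 0$ by definition, the absence of repeats upgrades these to the strict inequalities $m_1 > m_2 > \cdots > m_j \geq 0$. Thus $\ell = \sum_{i=1}^{j} 2^{m_i}$ expresses $\ell$ as a sum of \emph{distinct} powers of $2$ listed in strictly decreasing order, which is exactly the form of the binary power representation $\vect{L} = (n_1, \ldots, n_k)$.

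The heart of the argument is then the uniqueness of the binary representation of a positive integer: any expression of $\ell$ as a sum of distinct powers of $2$ agrees, after sorting, with its binary expansion. I would invoke this uniqueness (or, if a self-contained argument is preferred, prove it by a short induction on $\ell$) to conclude that $j = k$ and $m_i = n_i$ for every $i$, i.e. $\vect{M} = \vect{L}$. This establishes the contrapositive, and hence the claim: if $\vect{M} \neq \vect{L}$, then $\vect{M}$ cannot have all distinct entries, so it must contain a repeat.

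There is essentially no obstacle here beyond correctly framing the problem, since the content of the claim is precisely the uniqueness of binary expansions. The only point meriting a moment's care is the combinatorial mechanism behind it: a partition vector with a repeated entry, say $m_i = m_{i+1} = m$, contributes $2^m + 2^m = 2^{m+1}$ and can therefore always be ``carried'' into a representation with strictly fewer terms. Iterating this carry on any vector with repeats eventually produces the unique repeat-free vector, namely $\vect{L}$. This same observation foreshadows the switching lemmata, where pairs of equal-rank saturated subtrees are exactly the objects being merged or rearranged.
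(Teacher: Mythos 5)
Your proof is correct and is essentially the paper's own argument: both reduce the claim to the uniqueness of the representation of a positive integer as a sum of distinct powers of $2$, with your contrapositive framing being just a more explicit write-up of the same reasoning. The extra remark about ``carrying'' repeated entries is a nice aside but not needed for the claim itself.
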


\begin{proof}
It is well known that every positive integer $\ell$ can be uniquely represented as a sum of
distinct binary powers. Therefore, any other representation of the binary powers of $l$ must have repeated entries.
\end{proof}
    

Let $h$ be the minimum $i$ such that $m_i = m_{i+1}$, and let $u$ and $w$ be the corresponding saturated vertices with $|L(T(u))| = 2^{m_h}$ and $|L(T(w))| = 2^{m_{h+1}}$. Depending on the individual ranks of other vertices and the relative locations of $u$ and $w$ in $T$, we may now apply Lemmata~\ref{lem:sw1}, \ref{lem:sw2}, \ref{lem:sw3}, or Lemma~\ref{lem:sw4} and obtain a tree $T^*$ that will satisfy $R(T^*) \geq R(T)$. Two cases may occur:
\begin{itemize}
\item either the original $m_h$ and $m_{h+1}$ get temporarily merged into a $m_h+1$ (in the cases of Lemmata~\ref{lem:sw1}, \ref{lem:sw2}, \ref{lem:sw3}, the vertices $u$ and $w$ become siblings) $-$ the number of saturated vertices decreases in this case; or
\item the distance between $u$ and $w$, if still greater than 2, strictly decreases (in the case of Lemma~\ref{lem:sw4}).
\end{itemize}

Note that every time we apply Lemmata~\ref{lem:sw1}, \ref{lem:sw2}, \ref{lem:sw3}, or Lemma~\ref{lem:sw4}, either the value $h = \min \{i \geq 1\, | \, m_i = m_{i+1} \}$ increases, or $d(u,w)$ decreases while $h$ stays the same. Consequently, this process must terminate after finitely many steps. Therefore, since the binary power representation of any nonnegative integer is unique, by definition, the partition vector $\vect{M}$ (after the required switches) is equal to the binary power representation of $\ell$, $\vect{L}$. 

Let $T_1$ be the resulting tree from performing the ``switches'' to $T$ described in the above paragraph. Since $T_1$ has partition vector $\vect{L}$, every saturated vertex $v$ has a different rank corresponding to the height of $T_1(v)$. Let $u$ be the saturated vertex with the smallest height and $T_1(u)$ the corresponding subtree obtained from $u$ and its descendants. The height of $u$ is then $n_k$ and the number of leaves in $T_1(u)$ is $2^{n_k}$ (recall that $\vect{L}=(n_1, n_2, \ldots, n_k)$, with $n_1>n_2>\cdots>n_k$). The vertex $u$ is the root of $T_1$ if and only if $\ell$ is a binary power. In this case, we are done. Otherwise, let the parent of $u$ be $u_0$, and its sibling be $u_1$. Suppose that $u_0 $ is not the root of $T_1$. Let the parent of $u_0$ be $u_p$ and the root of $T_1$ be $v$ (possibly $v=u_p$). So $u$ is neither the root nor a child of the root.


We consider a new tree $T_1^* = T_1 - u_0u_1 - u_pu_0 + u_pu_1 + u_0v$ (see Figure~\ref{fig:tl1}), rooted at $u_0$.

\begin{figure}[H]
\centering
    \begin{tikzpicture}[scale=.65]
        
        \node[fill=black,circle,inner sep=1.4pt] (t2) at (2,2) {};
        \node[fill=black,circle,inner sep=1.4pt] (t4) at (3,3) {};

		  \node[fill=black,circle,inner sep=1.4pt] (tp) at (4,2) {};	              
        \node[fill=black,circle,inner sep=1.4pt] (t1) at (5,1) {};
        \node[fill=black,circle,inner sep=1.4pt] (t8) at (4,0) {};
        \node[fill=black,circle,inner sep=1.4pt] (t9) at (6,0) {};

        \draw [dashed] (3,3)--(tp);
        \draw (tp)--(t1);
        \draw (t2)--(t4);
        \draw [dotted] (t2)--(1.2,1)--(2.7,1)--(t2);

        \draw [dotted] (tp)--(2, 0)--(3.5, 0)--(3, 1);
        \draw (t8)--(t1)--(t9);
        \draw [dotted] (t8)--(3.2,-1)--(4.8,-1)--(t8);
        \draw [dotted] (t9)--(5.3,-2)--(6.7,-2)--(t9);

        \node at (3,3.4) {$v$};

        \node at (5,1.4) {$u_0$};
        \node at (4,.4) {$u$};
        \node at (6,.4) {$u_1$};
        \node at (4,2.5) {$u_p$};

        \node at (3, -2.5) {$T_1$};

        \begin{scope}[shift={(+9,0)}]
            \node[fill=black,circle,inner sep=1.4pt] (t2) at (2,2) {};
            \node[fill=black,circle,inner sep=1.4pt] (t4) at (3,3) {};

			\node[fill=black,circle,inner sep=1.4pt] (tp) at (4,2) {};	              
            \node[fill=black,circle,inner sep=1.4pt] (t1) at (5,1) {};
            \node[fill=black,circle,inner sep=1.4pt] (t1') at (2,4) {};
            \node[fill=black,circle,inner sep=1.4pt] (t8) at (1,3) {};
        
            \draw [dashed] (3,3)--(tp);
            \draw (tp)--(t1);
            \draw (t2)--(t4);
            \draw [dotted] (t2)--(1.2,1)--(2.7,1)--(t2);

            \draw (t8)--(t1')--(t4);
            \draw [dotted] (t8)--(0.2,2)--(1.7,2)--(t8);
            \draw [dotted] (t1)--(4.3,-1)--(5.7,-1)--(t1);
            \draw [dotted] (tp)--(2, 0)--(3.5, 0)--(3, 1);
        
            \node at (3,3.4) {$v$};
            
            \node at (2,4.4) {$u_0$};
            \node at (1,3.4) {$u$};
            \node at (5.7,1.4) {$u_1$};
            \node at (4,2.5) {$u_p$};
            
            \node at (3, -2.5) {$T_1^*$};
        \end{scope}

    \end{tikzpicture}
    \caption{The trees $T_1$ (on the left) and $T_1^*$ (on the right).}
    \label{fig:tl1}
\end{figure}

Following similar arguments as before and making use of the fact that $u$ has the smallest rank among all saturated vertices, one can see that $R(T_1^*) \geq R(T_1)$. In this case, $u$ becomes a child of the root of $T^*$. 

Now consider the case where $u_0$ is the root of $T_1$. As long as $k>2$, we may apply a similar operation to $w$, 
the saturated vertex with $2^{n_{k-1}}$ leaves in $T_1^{*}(w)$, and insert $w_0$ between $u_0$ and $v$ (Figure~\ref{fig:tl2}). For $k = 1$, the sole saturated vertex $u$ is the root, while for $k = 2$, the vertex $w$ is also a child of the root, just like $u$. The resulting tree $T_1^{**}$ has a total rank at least as high as before.

\begin{figure}[H]
\centering
    \begin{tikzpicture}[scale=.7]
        
        \node[fill=black,circle,inner sep=1.4pt] (t2) at (2,2) {};
        \node[fill=black,circle,inner sep=1.4pt] (t4) at (3,3) {};

        \node[fill=black,circle,inner sep=1.4pt] (t1) at (5,1) {};
        \node[fill=black,circle,inner sep=1.4pt] (t1') at (2,4) {};
        \node[fill=black,circle,inner sep=1.4pt] (t8) at (1,3) {};
        
        \node[fill=black,circle,inner sep=1.4pt] (s1') at (1,5) {};
        \node[fill=black,circle,inner sep=1.4pt] (s8) at (0,4) {};   

        \draw [dashed] (3,3)--(t1);
        \draw (t2)--(t4);
        \draw [dotted] (t2)--(1,0)--(3,0)--(t2);

        \draw (t8)--(t1')--(t4);
        \draw (t1')--(s1')--(s8);
        \draw [dotted] (t8)--(0.2,2)--(1.8,2)--(t8);
        \draw [dotted] (s8)--(-.8,3)--(.8,3)--(s8);
        \draw [dotted] (t1)--(4.3,-1)--(5.7,-1)--(t1);

        \node at (3,3.4) {$v$};
        \node at (1,5.4) {$u_0$};
        \node at (0,4.4) {$u$};
        
        \node at (2,4.4) {$w_0$};
        \node at (1,3.4) {$w$};
        \node at (5.3,1.4) {$w_1$};

    \end{tikzpicture}
    \caption{The tree $T_{1}^{**}$.}
    \label{fig:tl2}
\end{figure}
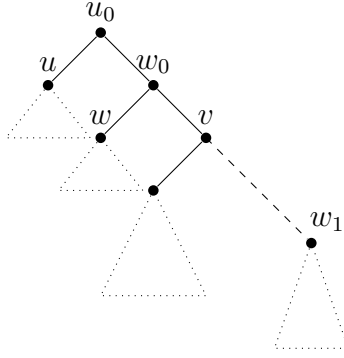

Repeatedly applying the above operation, we may insert the saturated vertices in the ``correct'' order to reach the tree $T_{\vect{L}}$. Since any partition $\vect{M}$ of a proper binary tree $T$ can have the security (weakly) increased to the binary power representation $\vect{L}$, and any tree with partition $\vect{L}$ can have security (weakly) increased to the tree $T_{\vect{L}}$, we conclude that $T_{\vect{L}}$ is indeed a tree which maximizes the security of a tree. \qed

\subsection{The maximum security in a proper binary tree}
As a result of Theorem~\ref{theo:tl}, we can now obtain a lower bound for the maximum security in proper binary trees. 
\begin{theorem}\label{theo:count}
Among proper binary trees $T$ on $\ell\geq 3$ leaves, 
$$ \max R(T) = 2(\ell -  \lfloor \log_2 \ell \rfloor - 1) + z(\ell)\,,$$
where $z(\ell)$ is the number of zeros in the binary representation of $\ell$ (OEIS \href{https://oeis.org/A023416}{A023416}).
\end{theorem}
\begin{proof}
    As a result of Theorem~\ref{theo:tl}, we know that for proper binary trees on $\ell$ leaves, $T_{\vect{L}}$ is a tree which maximizes the security. Thus, given that $\ell = \sum_{i=1}^{k} 2^{n_i}$, the maximum security is simply $R(T_{\vect{L}})$. For each $1\leq i \leq k$,  the saturated vertex $v$ whose $T(v)$ has $2^{n_i}$ leaves and its descendants give us the rank sum
    $$ 1 \times 2^{n_i-1} + 2 \times 2^{n_i-2} + \cdots + (n_i - 1) \times 2 + n_i \times 1 = 2^{n_i+1} -n_i -2 .$$
    Therefore, 
    $$ R(T_{\vect{L}}) = \sum_{i=1}^k (2^{n_i+1} -n_i -2) + \sum_{i=2}^{k} (n_i +1) = 2\ell - n_1 - k - 1 . $$

    Since $n_1 = \lfloor \log_2(\ell)\rfloor$, which is equal to $\lceil \log_2\ell \rceil - 1$ for $\ell$ not a power of $2$ and $\lceil \log_2\ell \rceil$ otherwise, and $k = \lfloor \log_2\ell \rfloor + 1 - z(\ell)$, we can conclude the statement.
\end{proof}


Now that one maximal tree has been fully described, it is interesting to consider other trees that obtain the maximal security, which will be done in Section~\ref{sec:almost-complete}. As an \textit{intermezzo}, we consider some additional swapping lemmata on the maximal trees of Theorem~\ref{theo:tl}. 

\begin{proposition}\label{prop:flips}
   Consider a proper binary tree on $\ell$ leaves $T_{\vect{L}}$, with $\ell$ having binary power representation $\vect{L} = (n_1, n_2, \ldots, n_k)$. If $n_i = n_{i+1}+1$ for fixed $i\in \{2,\dots ,k-1\}$, then using the notation of Definition~\ref{def:T-L}
    with $c_{i}$ the root of the complete binary tree on $2^{n_i}$ leaves, it holds that: 
    \begin{enumerate}
        \item  $R(T_{\vect{L}}) = R(T^*)$, where $T^* = T_{\vect{L}} - v_{i+1}c_{i+1} - v_ic_{i} + v_{i+1}c_{i} + v_ic_{i+1}$, and 
        \item  $R(T_{\vect{L}}) = R(T^*)$, where $T^* = T_{\vect{L}} - v_{i+1}c_{i+1} - v_{i}v_{i-1} + v_{i-1}v_{i+1} + v_ic_{i+1}$.
    \end{enumerate}
\end{proposition}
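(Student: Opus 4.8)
The plan is to prove both identities by direct computation of vertex ranks, exploiting the fact that each operation is a local rearrangement whose effect on the security is confined to the spine vertices $v_i, v_{i+1}, \dots, v_k$. First I would record the ranks of the relevant vertices in $T_{\vect{L}}$ itself. Since each $c_j$ is the root of a complete binary tree of height $n_j$, we have $R_{T_{\vect{L}}}(c_j) = n_j$. A short induction up the spine, using $n_1 > n_2 > \cdots > n_k$, then gives $R_{T_{\vect{L}}}(v_1) = n_1$ and $R_{T_{\vect{L}}}(v_j) = 1 + n_j$ for $2 \le j \le k$: indeed $R_{T_{\vect{L}}}(v_j) = \min\{1 + R_{T_{\vect{L}}}(v_{j-1}),\, 1 + n_j\}$, and the strict decrease of the $n_j$ forces the second term to win. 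This computation is the common backbone of both parts.

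For part (1), the operation swaps the two complete binary subtrees rooted at $c_i$ and $c_{i+1}$, leaving their internal structure (and hence all internal ranks) intact, so only the ranks of $v_i, v_{i+1}, \dots, v_k$ can change. After the swap $v_i$ has children $v_{i-1}$ and $c_{i+1}$, whence $R_{T^*}(v_i) = \min\{1 + R_{T_{\vect{L}}}(v_{i-1}),\, 1 + n_{i+1}\} = 1 + n_{i+1}$, a decrease of one from $R_{T_{\vect{L}}}(v_i) = 1 + n_i = 2 + n_{i+1}$, where the hypothesis $n_i = n_{i+1}+1$ enters. Meanwhile $v_{i+1}$ now has children $v_i$ and $c_i$, giving $R_{T^*}(v_{i+1}) = \min\{2 + n_{i+1},\, 1 + n_i\} = 2 + n_{i+1}$, an increase of one over $R_{T_{\vect{L}}}(v_{i+1}) = 1 + n_{i+1}$. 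I would then verify that this new value does not propagate: since $n_{i+2} < n_{i+1}$ we get $R_{T^*}(v_{i+2}) = \min\{3 + n_{i+1},\, 1 + n_{i+2}\} = 1 + n_{i+2}$, unchanged, and hence all of $v_{i+2}, \dots, v_k$ retain their ranks. The single decrease at $v_i$ and single increase at $v_{i+1}$ cancel, yielding $R(T^*) = R(T_{\vect{L}})$.

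Part (2) follows the same bookkeeping. Here the operation detaches the pendant subtree at $c_{i+1}$ and the spine-subtree rooted at $v_{i-1}$, reattaching $c_{i+1}$ as a child of $v_i$ and the $v_{i-1}$-subtree as a child of $v_{i+1}$; thus $v_i$ acquires the two children $c_i, c_{i+1}$ and $v_{i+1}$ acquires children $v_i$ and $v_{i-1}$. All relocated subtrees keep their internal ranks, so again $R_{T^*}(v_i) = \min\{1 + n_i,\, 1 + n_{i+1}\} = 1 + n_{i+1}$ (down by one) while $R_{T^*}(v_{i+1}) = \min\{1 + R_{T^*}(v_i),\, 1 + R_{T_{\vect{L}}}(v_{i-1})\} = 2 + n_{i+1}$ (up by one), the two changes cancelling and nothing above $v_{i+1}$ moving, so that $R(T^*) = R(T_{\vect{L}})$.

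The genuinely fiddly points — and where I would be most careful — are the propagation check (confirming the change halts at $v_{i+1}$ rather than cascading up the spine, which is precisely where $n_i = n_{i+1}+1$ is needed to keep the increase at $v_{i+1}$ equal to one) and the boundary index $i = 2$, where $v_{i-1} = v_1 = c_1$ has rank $n_1$ rather than $1 + n_1$. In each part one checks that the term $1 + R_{T_{\vect{L}}}(v_{i-1})$ is dominated — in part (2) via $1 + n_1 > 2 + n_{i+1}$, which holds since $n_1 \ge n_{i+1}+2$ — so the minima, and hence the conclusion, are unaffected. It is also worth remarking that the hypothesis $n_i = n_{i+1}+1$ is necessary rather than merely convenient: were $n_i > n_{i+1}+1$, the decrease at $v_i$ would be $n_i - n_{i+1} > 1$ while the increase at $v_{i+1}$ would remain exactly one, so the security would strictly drop.
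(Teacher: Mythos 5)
Your proof is correct and follows essentially the same approach as the paper's: a direct local computation showing that the rank of $v_i$ drops by exactly one while the rank of $v_{i+1}$ rises by exactly one, with all other ranks unchanged. In fact you are somewhat more careful than the paper, which leaves implicit both the check that the rank change does not propagate past $v_{i+1}$ and the boundary case $i=2$ where $R_{T_{\vect{L}}}(v_1)=n_1$ rather than $n_1+1$.
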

See Figure~\ref{fig:swaps} for examples of these swaps.

\begin{figure}[ht]
    \raisebox{7em}{(1)}
    \begin{tikzpicture}[scale = 0.7]
        \node (1) at (0, 0) {$\bullet$};
        \node (2) at (-1, -1) {$\bullet$};
        \node (3) at (1, -1) {$\bullet$};
        \node (4) at (0, -2) {$\bullet$};
 
        \draw[-] (1.center) to (2.center);
        \draw[-] (1.center) to (3.center);
        \draw[-] (2.center) to (4.center);
 
        \node at (-0.4, 0.4) {$v_{i+1}$};
        \node at (-1.4, -0.6) {$v_{i}$};
        \node at (1.25, -0.6) {$c_{i+1}$};
        \node at (0.1, -1.6) {$c_{i}$};
       
        \node at (0.25, 0.25) {$\cdot$};
        \node at (0.5, 0.5) {$\cdot$};
        \node at (0.75, 0.75) {$\cdot$};
 
        \node at (-1.25, -1.25) {$\cdot$};
        \node at (-1.5, -1.5) {$\cdot$};
        \node at (-1.75, -1.75) {$\cdot$};
 
        \node (i1) at (1, -2) {};
        \node (i2) at (2, -2) {};
 
        \draw[dotted] (3.center)--(i1.center)--(i2.center)--(3.center);
 
        \node (j1) at (-1, -3) {};
        \node (j2) at (1, -3) {};
 
        \draw[dotted] (4.center)--(j1.center)--(j2.center)--(4.center);
       
    \end{tikzpicture}\quad
    \raisebox{3em}{$\leftrightarrow$}\quad
    \begin{tikzpicture}[scale = 0.7]
        \node (1) at (0, 0) {$\bullet$};
        \node (2) at (-1, -1) {$\bullet$};
        \node (3) at (1, -1) {$\bullet$};
        \node (4) at (0, -2) {$\bullet$};
 
        \draw[-] (1.center) to (2.center);
        \draw[-] (1.center) to (4.center);
        \draw[-] (2.center) to (3.center);
 
        \node at (-0.4, 0.4) {$v_{i+1}$};
        \node at (-1.4, -0.6) {$v_{i}$};
        \node at (1.25, -0.6) {$c_{i+1}$};
        \node at (-0.4, -1.6) {$c_{i}$};
       
        \node at (0.25, 0.25) {$\cdot$};
        \node at (0.5, 0.5) {$\cdot$};
        \node at (0.75, 0.75) {$\cdot$};
 
        \node at (-1.25, -1.25) {$\cdot$};
        \node at (-1.5, -1.5) {$\cdot$};
        \node at (-1.75, -1.75) {$\cdot$};
 
        \node (i1) at (1, -2) {};
        \node (i2) at (2, -2) {};
 
        \draw[dotted] (3.center)--(i1.center)--(i2.center)--(3.center);
 
        \node (j1) at (-1, -3) {};
        \node (j2) at (1, -3) {};
 
        \draw[dotted] (4.center)--(j1.center)--(j2.center)--(4.center);
       
    \end{tikzpicture}\quad
    \raisebox{3em}{$\leftrightarrow$}\quad
    \begin{tikzpicture}[scale = 0.7]
        \node (1) at (0, 0) {$\bullet$};
        \node (2) at (-1, -1) {$\bullet$};
        \node (3) at (1, -1) {$\bullet$};
        \node (4) at (0, -2) {$\bullet$};
 
        \draw[-] (1.center) to (2.center);
        \draw[-] (1.center) to (3.center);
        \draw[-] (2.center) to (4.center);
 
        \node at (-0.4, 0.4) {$v_{i+1}$};
        \node at (-1.4, -0.6) {$v_{i}$};
        \node at (1.25, -0.6) {$c_{i}$};
        \node at (0.8, -2.1) {$c_{i+1}$};
       
        \node at (0.25, 0.25) {$\cdot$};
        \node at (0.5, 0.5) {$\cdot$};
        \node at (0.75, 0.75) {$\cdot$};
 
        \node at (-1.25, -1.25) {$\cdot$};
        \node at (-1.5, -1.5) {$\cdot$};
        \node at (-1.75, -1.75) {$\cdot$};
 
        \node (i1) at (0.2, -1.8) {};
        \node (i2) at (1.8, -1.8) {};
 
        \draw[dotted] (3.center)--(i1.center)--(i2.center)--(3.center);
 
        \node (j1) at (0, -3) {};
        \node (j2) at (1, -3) {};
 
        \draw[dotted] (4.center)--(j1.center)--(j2.center)--(4.center);
       
    \end{tikzpicture}
    
    \raisebox{7em}{(2)}
    \begin{tikzpicture}[scale = 0.7]
        \node (1) at (0, 0) {$\bullet$};
        \node (2) at (-1, -1) {$\bullet$};
        \node (3) at (1, -1) {$\bullet$};
        \node (4) at (0, -2) {$\bullet$};
        \node (5) at (-2, -2) {$\bullet$};
 
        \draw[-] (1.center) to (2.center);
        \draw[-] (1.center) to (3.center);
        \draw[-] (2.center) to (4.center);
        \draw[-] (2.center) to (5.center);
 
        \node at (-0.4, 0.4) {$v_{i+1}$};
        \node at (-1.4, -0.6) {$v_{i}$};
        \node at (-2.4, -1.6) {$v_{i-1}$};
        \node at (1.25, -0.6) {$c_{i+1}$};
        \node at (0.1, -1.6) {$c_{i}$};
       
        \node at (0.25, 0.25) {$\cdot$};
        \node at (0.5, 0.5) {$\cdot$};
        \node at (0.75, 0.75) {$\cdot$};
 
        \node (i1) at (1, -2) {};
        \node (i2) at (2, -2) {};
 
        \draw[dotted] (3.center)--(i1.center)--(i2.center)--(3.center);
 
        \node (j1) at (-0.9, -3) {};
        \node (j2) at (1, -3) {};
 
        \draw[dotted] (4.center)--(j1.center)--(j2.center)--(4.center);
       
        \node (k1) at (-3, -3) {};
        \node (k2) at (-1.1, -3) {};
 
        \draw[dotted] (5.center)--(k1.center)--(k2.center)--(5.center);
       
    \end{tikzpicture}\quad
    \raisebox{3em}{$\leftrightarrow$}\quad
        \begin{tikzpicture}[scale = 0.7]
        \node (1) at (0, 0) {$\bullet$};
        \node (2) at (-1, -1) {$\bullet$};
        \node (3) at (1, -1) {$\bullet$};
        \node (4) at (0, -2) {$\bullet$};
        \node (5) at (-2, -2) {$\bullet$};
 
        \draw[-] (1.center) to (2.center);
        \draw[-] (2.center) to (4.center);
        \draw[-] (2.center) to (3.center);
        \draw[bend right] (1.center) to (5.center);
 
        \node at (-0.4, 0.4) {$v_{i+1}$};
        \node at (-1, -1.4) {$v_{i}$};
        \node at (-2.4, -1.6) {$v_{i-1}$};
        \node at (1.25, -0.6) {$c_{i+1}$};
        \node at (0.1, -1.6) {$c_{i}$};
       
        \node at (0.25, 0.25) {$\cdot$};
        \node at (0.5, 0.5) {$\cdot$};
        \node at (0.75, 0.75) {$\cdot$};
 
        \node (i1) at (1, -2) {};
        \node (i2) at (2, -2) {};
 
        \draw[dotted] (3.center)--(i1.center)--(i2.center)--(3.center);
 
        \node (j1) at (-0.9, -3) {};
        \node (j2) at (1, -3) {};
 
        \draw[dotted] (4.center)--(j1.center)--(j2.center)--(4.center);
       
        \node (k1) at (-3, -3) {};
        \node (k2) at (-1.1, -3) {};
 
        \draw[dotted] (5.center)--(k1.center)--(k2.center)--(5.center);
       
    \end{tikzpicture}\quad
    \raisebox{3em}{$\leftrightarrow$}\quad
    \begin{tikzpicture}[scale = 0.7]
        \node (1) at (0, 0) {$\bullet$};
        \node (2) at (-1, -1) {$\bullet$};
        \node (3) at (1, -1) {$\bullet$};
        \node (4) at (0, -2) {$\bullet$};
        \node (5) at (2, -2) {$\bullet$};
 
        \draw[-] (1.center) to (2.center);
        \draw[-] (1.center) to (3.center);
        \draw[-] (3.center) to (4.center);
        \draw[-] (3.center) to (5.center);
 
        \node at (-0.4, 0.4) {$v_{i+1}$};
        \node at (-1.4, -0.6) {$v_{i-1}$};
        \node at (1.25, -0.6) {$v_{i}$};
        \node at (2.25, -1.6) {$c_{i+1}$};
        \node at (-0.2, -1.6) {$c_{i}$};
       
        \node at (0.25, 0.25) {$\cdot$};
        \node at (0.5, 0.5) {$\cdot$};
        \node at (0.75, 0.75) {$\cdot$};
 
        \node (i1) at (3, -3) {};
        \node (i2) at (2, -3) {};
 
        \draw[dotted] (5.center)--(i1.center)--(i2.center)--(5.center);
 
        \node (j1) at (-1, -3) {};
        \node (j2) at (0.9, -3) {};
 
        \draw[dotted] (4.center)--(j1.center)--(j2.center)--(4.center);
       
        \node (k1) at (-.2, -2) {};
        \node (k2) at (-1.8, -2) {};
 
        \draw[dotted] (2.center)--(k1.center)--(k2.center)--(2.center);
       
    \end{tikzpicture}
   
    \caption{The different swapping configurations described above.}
    \label{fig:swaps}
\end{figure}
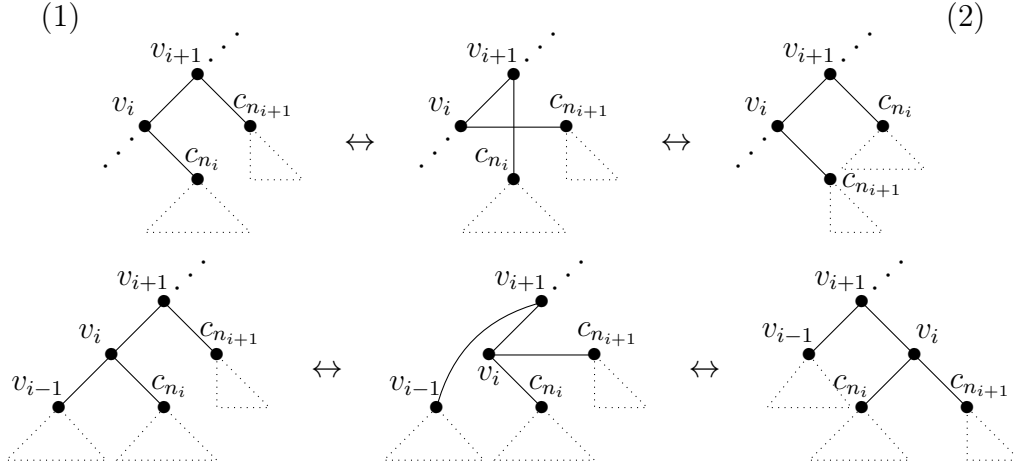

\begin{proof}
Let $T$ be the maximal tree $T_{\vect{L}}$, with $n_1 > n_2 > \cdots > n_k$. Therefore, by construction, $R_T(v_i) = R_T(c_{i})+1$ for $i \geq 2$. Note additionally that since $n_i = n_{i+1}+1$, it holds that $R_T(c_{i}) = R_T(c_{i+1}) + 1$.

Now, consider case (1). As mentioned, $R_T(v_{i+1}) = R_T(c_{i+1})+1$ and $R_T(v_{i}) = R_T(c_{i})+1$. After performing the swap, we have that $R_{T^*}(v_i) = R_T(c_{i+1})+1$, and 
\begin{equation*}
    R_{T^*}(v_{i+1}) = \min\{R_{T^*}(v_i), R_{T^*}(c_{i})\}+1 = \min\{R_T(c_{i+1})+1, R_T(c_{i})\}+1 = R_T(c_{i}) +1.
\end{equation*}
Therefore, this operation does not change the sum of ranks  over all vertices (security). 

Consider case (2). Note that $R_T(v_{i-1}) \geq R_T(c_{i}) = R_T(c_{i+1})+1$. Now, 
\begin{equation*}
    R_{T^*}(v_i) = \min\{R_T(c_{i}), R_T(c_{i+1})\}+1 = R_T(c_{i+1})+1, 
\end{equation*}
and since $R_{T^*}(v_i) \leq R_{T}(v_{i-1}) = R_{T^*}(v_{i-1})$, we get
\begin{equation*}
    R_{T^*}(v_{i+1}) = \min\{R_{T^*}(v_i), R_{T^*}(v_{i-1})\}+1 = R_{T^*}(v_i)+1 = R_T(c_{i+1})+2 = R_T(c_{i})+1.
\end{equation*}
Therefore, this operation also does not change the sum of ranks over all vertices (security). 
\end{proof}

\begin{remark}
It is easy to see that Proposition~\ref{prop:flips} provides a general construction of another family of trees with maximum security.    
\end{remark}

\section{Most protected proper binary trees: Almost complete trees}\label{sec:almost-complete}

We now prove a property of the maximum security trees discussed in the previous section, with the aim of showing that another family of trees has the same property (and thus also achieves maximum security).

\begin{definition}
    Let $\ell$ be a positive integer. Let $F(\ell)$ be a proper binary tree constructed as follows:
    \begin{itemize} 
        \item Create a complete binary tree with $2^{\lfloor \log_2\ell\rfloor}$ leaves, denoted (from left to right) $z_1$, $z_2$, \ldots, $z_{2^{\lfloor \log_2\ell\rfloor}}$.
        \item For $1 \leq i \leq \ell - 2^{\lfloor \log_2\ell\rfloor}$, attach two children to each leaf vertex $z_i$. We call each of these newly attached children \emph{new leaves}. 
    \end{itemize}
\end{definition}
Note that if $\ell$ is a power of two, this process just generates a complete binary tree. If $\ell$ is not a power of two, then some, but not all, leaves of 
the initial complete binary tree have children. An example of a tree $F(\ell)$ is given in Figure~\ref{fig:T_F}.

\begin{figure}[htbp]
    \centering

    \begin{tikzpicture}[scale = 0.9]
        \node (0) at (0, 0){$\bullet$};
        \node (1) at (-2, -1){$\bullet$};
        \node (2) at (2, -1){$\bullet$};
        \node (3) at (-3, -2){$\bullet$};
        \node (4) at (-1, -2){$\bullet$};
        \node (5) at (1, -2){$\bullet$};
        \node (6) at (3, -2){$\bullet$};
        \node (7) at (-3.5, -3){$\bullet$};
        \node (8) at (-2.5, -3){$\bullet$};
        \node (9) at (-1.5, -3){$\bullet$};
        \node (10) at (-0.5, -3){$\bullet$};
        \node (11) at (0.5, -3){$\bullet$};
        \node (12) at (1.5, -3){$\bullet$};

        \draw[-] (0.center) to (1.center);
        \draw[-] (0.center) to (2.center);
        \draw[-] (1.center) to (3.center);
        \draw[-] (1.center) to (4.center);
        \draw[-] (3.center) to (7.center);
        \draw[-] (3.center) to (8.center);
        \draw[-] (4.center) to (9.center);
        \draw[-] (4.center) to (10.center);
        \draw[-] (2.center) to (5.center);
        \draw[-] (2.center) to (6.center);
        \draw[-] (5.center) to (11.center);
        \draw[-] (5.center) to (12.center);
    \end{tikzpicture}
    \qquad 
    \begin{tikzpicture}[scale = 0.9]
        \node (0) at (-1, 0){$\bullet$};
        \node (1) at (-2, -1){$\bullet$};
        \node (2) at (0, -1){$\bullet$};
        \node (3) at (-3, -2){$\bullet$};
        \node (4) at (-0.5, -2){$\bullet$};
        \node (5) at (-4, -3){$\bullet$};
        \node (6) at (-2, -3){$\bullet$};
        \node (7) at (-1, -3){$\bullet$};
        \node (8) at (0, -3){$\bullet$};
        \node (9) at (-4.5, -4){$\bullet$};
        \node (10) at (-3.5, -4){$\bullet$};
        \node (11) at (-2.5, -4){$\bullet$};
        \node (12) at (-1.5, -4){$\bullet$};

        \draw[-] (0.center) to (1.center);
        \draw[-] (0.center) to (2.center);
        \draw[-] (1.center) to (3.center);
        \draw[-] (1.center) to (4.center);
        \draw[-] (3.center) to (5.center);
        \draw[-] (3.center) to (6.center);
        \draw[-] (4.center) to (7.center);
        \draw[-] (4.center) to (8.center);
        \draw[-] (5.center) to (9.center);
        \draw[-] (5.center) to (10.center);
        \draw[-] (6.center) to (11.center);
        \draw[-] (6.center) to (12.center);
    \end{tikzpicture}
    \caption{The  tree $F(\ell)$ corresponding to $\ell = 7$ (left) and the $T_{\vect{L}}$ tree corresponding to the same $\ell$ (right). Note 
    that the security, 8, is the same in both.}
    \label{fig:T_F}
\end{figure}
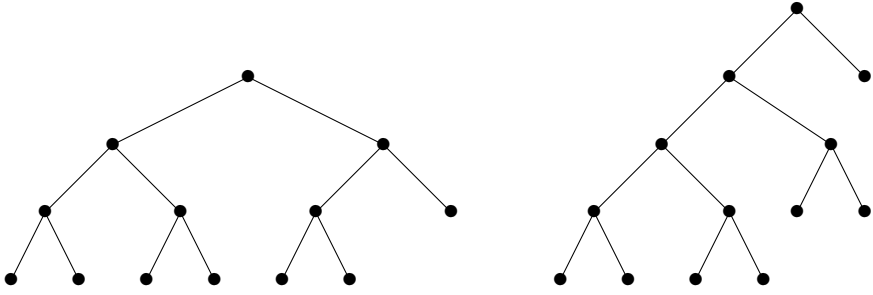

These ``almost complete trees'' were first introduced as ``good trees'' in \cite{Szekely-Wang:2005}, and have been ever since shown to be an extremal structure with respect to many different graph invariants. 

\medskip
We give an alternative proof that $R(F(\ell)) = R(T_{\vect{L}})$, which shows constructively that the sums of ranks are equal. In addition, this gives an algorithm for the construction of an $F(\ell)$ tree from the binary power representation of $\ell$.

\medskip
\begin{description}
\item[Algorithm for constructing $F(\ell)$]\phantom{-}

Let $\ell=\sum_{i=1}^k2^{n_i}$ with $n_1> n_2>\dots >n_k\geq 0$. Suppose $n_1 \geq 1$. 

Let $d_i=n_{i-1}-n_{i}$ for each $1<i\leq k$.

For any $m\in \N$, let $c_m$ be a complete binary tree with $2^m$ leaves.

If $q_i$ is the root of a $c_{n_i}$ tree, let $\ell_i$ and $r_i$ be the left and right child of $q_i$, respectively (if they exist).

\begin{itemize}
\item \textbf{Step 1}: Construct the tree $c_{n_1}$ with root $q_1$. This tree has a $c_{n_1-1}$ tree rooted at both $\ell_1$ and $r_1$.   

\item \textbf{Step $i$}: For $2\leq i\leq k$, consider the $c_{n_{i-1}}$ tree rooted at $q_{i-1}$ and reconsider the $c_{n_{i-1}-1}$ tree rooted at $\ell_{i-1}$ as a $c_{d_{i}-1}$ tree with a $c_{n_{i}}$ tree rooted at each of its leaves. There are $2^{d_i-1}$ such $c_{n_i}$ trees, accounting for $2^{d_i-1}2^{n_i} = 2^{n_{i-1}-1}$. If $d_i>1$, consider the leftmost $c_{n_i}$ tree, and call its root $\rho_i$. If $d_i=1$, we have a $c_{n_i}$ tree rooted at $\ell_{i-1}$, so relabel $\ell_{i-1}$ as $\rho_i$. To each of the leaf vertices of the $c_{n_i}$ tree rooted at $\rho_i$, add two leaf children so that we now have a $c_{n_i+1}$ tree rooted at $\rho_i$.

Note that $\rho_i$ has a sibling vertex (possibly $r_{i-1}$, if $\rho_i = \ell_{i-1}$) and label this vertex $q_i$. In both cases, $q_i$ is the root of a $c_{n_i}$ tree. The tree at the end of step $i$ is the tree $F(2^{n_1}+\cdots+2^{n_i})$. 
\end{itemize}
\end{description}
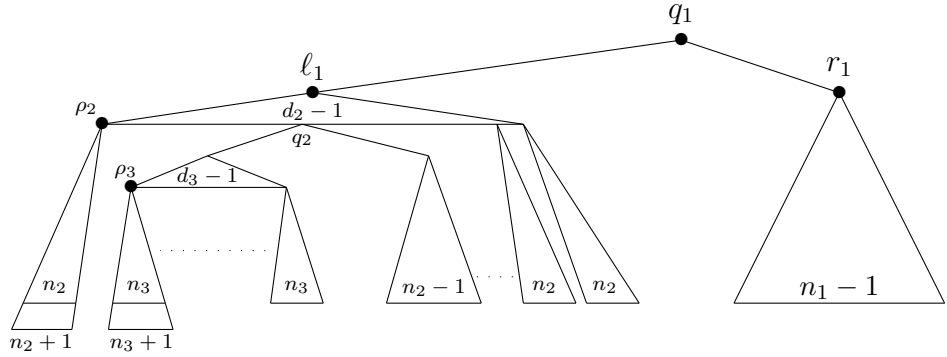
\begin{figure}[htbp]
        \begin{tikzpicture}[scale = 0.7]
            \node (1) at (0, 0) {$\bullet$}; 
            
            \node (2) at (-7, -1) {$\bullet$}; 
            \node (3) at (3, -1) {$\bullet$}; 

            \node at (0, 0.5) {$q_1$};
            \node at (-7, -0.5) {$\ell_1$};
            \node at (3, -0.5) {$r_1$};

            \node (4) at (1,-5) { }; 
            \node (5) at (5,-5) { };
            \node (21) at (3,-4.7) {\small $n_1-1$};
            
            \node (6) at (-3,-1.6) { };
            \node (7) at (-11,-1.6) {$\bullet$};
            \node at (-11.3,-1.3) {\tiny $\rho_2$};
            \node (20) at (-7,-1.35) {\tiny $d_2-1$};
            \node at (-7.2,-1.9) {\tiny $q_2$};

            \node (8) at (-0.8,-5) { };
            \node (9) at (-1.8,-5) { };

            \node (10) at (-2,-5) { };
            \node (11) at (-3,-5) { };
            \node (12) at (-3.5,-1.6) { };


            \node (14) at (-12.5,-5) { };
            \node (15) at (-11.5,-5) { };
            \node (25) at (-11.9,-4.7) {\tiny $n_2$};
            \node (40) at (-12.72,-5.5) { };
            \node (41) at (-11.5735,-5.5) { };
            \draw[-] (14.center) to (40.center);
            \draw[-] (15.center) to (41.center);
            \draw[-] (41.center) to (40.center);
            \node (43) at (-12.16,-5.75) {\tiny $n_2+1$};

            \node (16) at (-9,-2.2) { };
            \node (17) at (-4.8,-2.2) { };
            \node (18) at (-7.2,-1.6) { };

            
            \node (22) at (-1.45,-4.7) {\tiny $n_2$};
            \node (23) at (-2.6,-4.7) {\tiny $n_2$};
            \draw[-,loosely dotted] (-3.2,-4.5) to (-3.9,-4.5);

            \node (25) at (-10.8,-5) { };
            \node (26) at (-9.8,-5) { };
            \node (30) at (-10.3,-4.7) {\tiny $n_3$};
            \node (44) at (-10.8796,-5.5) { };
            \node (45) at (-9.65225,-5.5) { };
            \draw[-] (44.center) to (25.center);
            \draw[-] (26.center) to (45.center);
            \draw[-] (44.center) to (45.center);
            \node at (-10.25,-5.75) {\tiny $n_3+1$};

            \node (27) at (-6.8,-5) { };
            \node (28) at (-7.8,-5) { };
            \node (29) at (-7.3,-4.7) {\tiny $n_3$};

            \node (32) at (-3.8,-5) { };
            \node (33) at (-5.6,-5) { };
            \node (34) at (-4.7,-4.7) {\tiny $n_2-1$};
            
            \draw[-] (17.center) to (32.center);
            \draw[-] (17.center) to (33.center);
            \draw[-] (33.center) to (32.center);

            \node (35) at (-10.45,-2.8) {$\bullet$};
            \node at (-10.55,-2.5) {\tiny $\rho_3$};
            \node (36) at (-7.5,-2.8) { };
            \node (37) at (-9,-2.59) {\tiny $d_3-1$ };

            \draw[-] (16.center) to (35.center);
            \draw[-] (16.center) to (36.center);
            \draw[-] (36.center) to (35.center);

            \draw[-] (35.center) to (25.center);
            \draw[-] (35.center) to (26.center);
            \draw[-] (26.center) to (25.center);

            \draw[-] (36.center) to (27.center);
            \draw[-] (36.center) to (28.center);
            \draw[-] (27.center) to (28.center);

            \draw[-,loosely dotted] (-7.9,-4) to (-9.9,-4);


            \draw[-] (18.center) to (16.center);
            \draw[-] (18.center) to (17.center);

            \draw[-] (7.center) to (14.center);
            \draw[-] (7.center) to (15.center);
            \draw[-] (14.center) to (15.center);

            \draw[-] (3.center) to (4.center);
            \draw[-] (3.center) to (5.center);
            \draw[-] (5.center) to (4.center);

            \draw[-] (2.center) to (6.center);
            \draw[-] (2.center) to (7.center);
            \draw[-] (6.center) to (7.center);

            \draw[-] (6.center) to (8.center);
            \draw[-] (6.center) to (9.center);
            \draw[-] (8.center) to (9.center);

            \draw[-] (12.center) to (10.center);
            \draw[-] (12.center) to (11.center);
            \draw[-] (10.center) to (11.center);

            \draw[-] (1.center) to (2.center);
            \draw[-] (1.center) to (3.center);

        \end{tikzpicture}
        \caption{$F(\ell)$ for $2^{n_1}+2^{n_2}+2^{n_3}$ leaves where $n_1=n_2+d_2$, and $n_2=n_3+d_3$.}
        \label{tee_eff_ell1}
\end{figure}
See Figure~\ref{tee_eff_ell1} for an illustration. Following the above algorithm, we can verify that the total number of leaves added is, in fact, equal to $\ell$.
\begin{align*}
\text{Total leaves in }F(\ell)&=\underbrace{\left(\sum_{i=2}^k2^{n_i+1}\right)}_{\#\text{ of leaves at height }n_1+1}+\underbrace{2^{n_1}-\left(\sum_{i=2}^k2^{n_i}\right)}_{\#\text{ of leaves at height }n_1}=\ell.
\end{align*}

\begin{proposition}
   Suppose $\ell=\sum_{i=1}^k2^{n_i}$ with $n_1> n_2>\dots >n_k\geq 0$. Then 
   $$R(F(\ell))=R(T_{\bm{L}})\,.$$
\end{proposition}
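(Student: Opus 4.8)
The plan is to prove the identity by computing $R(F(\ell))$ directly, step by step along the construction algorithm, and then matching it against the closed form $R(T_{\vect{L}}) = 2\ell - n_1 - k - 1$ obtained in the proof of Theorem~\ref{theo:count}. For $1 \le i \le k$ write $\ell_i = \sum_{j=1}^{i} 2^{n_j}$, so that the tree produced at the end of step $i$ is exactly $F(\ell_i)$, and set $S_i = R(F(\ell_i))$. The base case is Step~1, where $F(\ell_1) = c_{n_1}$ is a complete binary tree; its sum of ranks is $S_1 = 2^{n_1+1} - n_1 - 2$, which is precisely the per-subtree computation already carried out in Theorem~\ref{theo:count}.

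For the inductive increment I would analyze the effect of Step $i$ (for $2 \le i \le k$), the sole operation turning $F(\ell_{i-1})$ into $F(\ell_i)$: one grows the complete subtree $c_{n_i}$ rooted at $\rho_i$ into a $c_{n_i+1}$ by attaching two leaves below each of its $2^{n_i}$ bottom vertices. The key observation is that this leaves the rank of every vertex \emph{outside} $\rho_i$'s subtree unchanged. Indeed, since a vertex's rank depends only on its own descendants, only $\rho_i$ and its proper ancestors can be affected. In both cases distinguished by the algorithm ($d_i > 1$ and $d_i = 1$) the sibling $q_i$ of $\rho_i$ is, at the start of Step $i$, the root of a pristine $c_{n_i}$ whose leaves sit at the base level, so $R(q_i) = n_i$. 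Writing $p$ for the common parent of $\rho_i$ and $q_i$, we have $R_{}(p) = 1 + \min\{R(\rho_i), R(q_i)\}$, and since $R(\rho_i)$ merely rises from $n_i$ to $n_i+1$ while $R(q_i) = n_i$ is pinned, the minimum, and hence $R(p)$, stays equal to $n_i+1$. Propagating this equality up the path to the root shows every ancestor of $\rho_i$ retains its rank, so the only change to the security is local to $\rho_i$'s subtree.

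It then remains to compute that local change. Using the identity $R(c_m) = 2^{m+1} - m - 2$, growing $c_{n_i}$ to $c_{n_i+1}$ alters the security by exactly $\Delta_i = (2^{n_i+2} - (n_i+1) - 2) - (2^{n_i+1} - n_i - 2) = 2^{n_i+1} - 1$. Summing the increments yields $R(F(\ell)) = S_1 + \sum_{i=2}^{k}\Delta_i = (2^{n_1+1} - n_1 - 2) + \sum_{i=2}^{k}(2^{n_i+1} - 1) = 2\ell - n_1 - k - 1$, which equals $R(T_{\vect{L}})$ by Theorem~\ref{theo:count}.

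The step I expect to require the most care is the invariance of the ancestor ranks: one must confirm, using the precise sibling structure guaranteed by the algorithm, that at the start of Step $i$ the vertex $\rho_i$ is a genuine (ungrown) $c_{n_i}$ and that its sibling $q_i$ is likewise a $c_{n_i}$ with all leaves at the base level, so that $R(q_i)=n_i$ pins $R(p)=n_i+1$; everything else is bookkeeping. Should the closed-form comparison feel indirect, an alternative is to match $T_{\vect{L}}$ term by term: for $i \ge 2$ the contribution of the $i$-th complete subtree of $T_{\vect{L}}$ together with its caterpillar vertex $v_i$ (of rank $n_i+1$) is $2^{n_i+1} - n_i - 2 + (n_i+1) = 2^{n_i+1} - 1 = \Delta_i$, so the two securities accumulate in identical increments; but invoking Theorem~\ref{theo:count} is the cleanest route.
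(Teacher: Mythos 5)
Your proposal is correct and follows essentially the same route as the paper's proof: both proceed by induction along the construction algorithm, observe that growing the $c_{n_i}$ at $\rho_i$ into a $c_{n_i+1}$ changes no rank outside that subtree (the paper argues this via the sibling $c_{n_i}$ tree pinning the closest-leaf distances of all ancestors, exactly the point you verify through $R(p)=1+\min\{R(\rho_i),R(q_i)\}$), and then telescope the increments $R(c_{n_i+1})-R(c_{n_i})=2^{n_i+1}-1$ to reach $2\ell-n_1-k-1=R(T_{\vect{L}})$. Your treatment of the ancestor-invariance step is, if anything, slightly more explicit than the paper's.
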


\begin{proof}

Note that the security of a $c_{n_i}$ tree is $$R(c_{n_i})=2^{n_i+1}-n_i-2.$$

To verify the rank formula, let $S_i$ be the tree at the end of step $i$ of the algorithm for constructing $F(\ell)$. Then $S_1$ is a $c_{n_1} = F(2^{n_1})$, $S_2$ is a $F(2^{n_1}+2^{n_2})$ tree, and so on. Then $R(S_i)=R(S_{i-1})+R(c_{n_{i}+1})-R(c_{n_i})$ since at each step one removes a $c_{n_i}$ tree and replaces it with a $c_{n_{i}+1}$ tree. 

Clearly, this operation does not affect the ranks of the remaining vertices. Indeed, since at each step, $i\geq 2$, one splits a $c_{n_{i-1}}$ tree into two or more $c_{n_i}$, the removed root $\rho_i$ always has a sibling. Consequently, $\rho_i$ always has the closest leaf within another $c_{n_i}$ tree (or was originally a leaf). As a result, removing the root $\rho_i$ does not alter the ranks of any other nodes.
Then
\begin{align*}
R(F(\ell))&=R(S_k)\\
&=R(S_{k-1})+R(c_{n_{k}+1})-R(c_{n_k})\\
&=R(c_{n_1})+\sum_{i=2}^k(R(c_{n_i+1})-R(c_{n_i}))\\
&=R(c_{n_1})+\sum_{i=2}^k\big[(2^{n_i+2}-(n_i+1)-2)-(2^{n_i+1}-n_i-2)\big]\\
&=R(c_{n_1})+\sum_{i=2}^k(2^{n_i+1}-1)\\
&=2^{n_1+1}-n_1-2+2(\ell-2^{n_1})-(k-2+1)\\
&=2\ell-n_1-k-1=R(T_{\bm{L}}).
\end{align*}
\end{proof}

\section{The most protected vertex/root}\label{sec:most-protected}

First we point out the trivial, but still interesting, observation that in a tree $T$ rooted at $r$, the maximum rank $R_T(v)$ is not necessarily achieved at the root. This can be seen from the simple example in Figure~\ref{fig:ex1}, where the root $r$ has rank 1 and the vertex $v$ can have an arbitrarily large rank. 

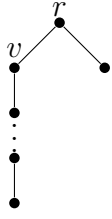
\begin{figure}[htbp]
\centering
    \begin{tikzpicture}[scale=.6]
        \node[fill=black,circle,inner sep=1.4pt] (t1) at (3,0) {};
        \node[fill=black,circle,inner sep=1.4pt] (t2) at (3,1) {};
        \node[fill=black,circle,inner sep=1.4pt] (t4) at (3,2) {};
		\node[fill=black,circle,inner sep=1.4pt] (t6) at (3,3) {};
        \node[fill=black,circle,inner sep=1.4pt] (t8) at (4,4) {};
        \node[fill=black,circle,inner sep=1.4pt] (t9) at (5,3) {};

        \draw (t1)--(t2);
        \draw (t4)--(t6);
		\draw (t6)--(t8);
        \draw (t8)--(t9);
        \node at (3,1.65) {$\vdots$};
        \node at (4,4.3) {$r$};
        \node at (3,3.4) {$v$};

    \end{tikzpicture}
    \caption{A rooted tree with $R_T(r)=1$ and $R_T(v)>1$. }\label{fig:ex1}
\end{figure}

There do not seem to be any obvious characteristics of the most protected vertices in a tree. Let the set of such vertices of $T$ be denoted by $R_{max}(T)$. It may contain only one vertex (as in Figure~\ref{fig:ex1}) or multiple vertices. When $R_{max}(T)$ contains multiple vertices, they may induce a subtree (i.e. they are all adjacent) as in Figure~\ref{fig:bin} (where all internal vertices in $R_{max}(T)$); they may also form an independent set as in Figure~\ref{fig:ex2}.

\begin{figure}[htbp]
\centering
    \begin{tikzpicture}[scale=.6]
        \node[fill=black,circle,inner sep=1.4pt] (t1) at (0,0) {};
        \node[fill=black,circle,inner sep=1.4pt] (t2) at (1,1) {};
        \node[fill=black,circle,inner sep=1.4pt] (t3) at (2,0) {};
        \node[fill=black,circle,inner sep=1.4pt] (t4) at (2,2) {};
        \node[fill=black,circle,inner sep=1.4pt] (t5) at (3,1) {};
				        \node[fill=black,circle,inner sep=1.4pt] (t6) at (3,3) {};
        \node[fill=black,circle,inner sep=1.4pt] (t7) at (4,2) {};
        \node[fill=black,circle,inner sep=1.4pt] (t8) at (4,4) {};
        \node[fill=black,circle,inner sep=1.4pt] (t9) at (5,3) {};

        \draw (t1)--(t2);
		\draw (t4)--(t8);
        \draw (t2)--(t3);
        \draw (t4)--(t5);
        \draw (t7)--(t6);
        \draw (t8)--(t9);
        \node at (1.5,1.7) {$\iddots$};

        \end{tikzpicture}
\caption{A rooted proper binary caterpillar.}\label{fig:bin}
\end{figure}
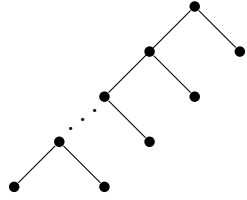

\begin{figure}[htbp]
\centering
    \begin{tikzpicture}[scale=.6]
        \node[fill=black,circle,inner sep=1.4pt] (t1) at (0,0) {};
        \node[fill=black,circle,inner sep=1.4pt] (t2) at (1,1) {};
        \node[fill=black,circle,inner sep=1.4pt] (t3) at (2,0) {};
        \node[fill=black,circle,inner sep=1.4pt] (t4) at (2,2) {};
        \node[fill=black,circle,inner sep=1.4pt] (t5) at (3,1) {};
 \node[fill=black,circle,inner sep=1.4pt] (t5') at (4,0) {};
				        \node[fill=black,circle,inner sep=1.4pt] (t6) at (3,3) {};
        \node[fill=black,circle,inner sep=1.4pt] (t7) at (4,2) {};
        \node[fill=black,circle,inner sep=1.4pt] (t8) at (4,4) {};
        \node[fill=black,circle,inner sep=1.4pt] (t9) at (5,3) {};
\node[fill=black,circle,inner sep=1.4pt] (t9') at (6,2) {};

        \draw (t1)--(t8);
        \draw (t2)--(t3);
        \draw (t4)--(t5');
        \draw (t7)--(t6);
        \draw (t8)--(t9');
       \node at (4,4.3) {$u$};
\node at (2,2.3) {$v$};

        \end{tikzpicture}
\caption{A rooted tree with two most protected vertices $u$ and $v$.}\label{fig:ex2}
\end{figure}
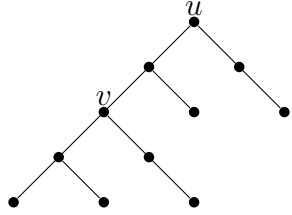

Given the motivation for vertex ranks and ``protectedness'', it is natural to find, among a class of rooted trees, the most protected vertex (among all possible tree structures) and its rank. 
That is, for a given class of rooted trees $\mathcal{T}$ we want to find
$$ R_{\mathcal{T}} := \max_{T \in \mathcal{T}} \max_{v \in V(T)} R_T(v)\,.$$
Let $R_T(r)$ denote the rank of the root $r$ in $T$. We first show a useful fact that among all trees of a given order $R_{\mathcal{T}}$ can be achieved at a root.

\subsection{General trees}

First note the trivial fact that the rank of any vertex in a tree on $n$ vertices cannot be possibly larger than $n-1$.

\begin{proposition}\label{prop:path1}
Let $\mathcal{T}_n$ denote the set of trees with order $n$. Then $R_{\mathcal{T}_n} = R_T(r)$ for some $T \in \mathcal{T}_n$ with root $r$. Moreover, $$ R_T(r) \leq n-1 $$
with equality if and only if $T$ is a path and $r$ is one of its end vertices. 
\end{proposition}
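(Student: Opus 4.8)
The plan is to prove the two assertions of Proposition~\ref{prop:path1} in turn: first the structural claim that $R_{\mathcal{T}_n}$ is always realized at a root, and then the sharp bound $R_T(r) \le n-1$ together with its equality characterization.

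For the first assertion, the key observation is that the rank $R_T(v)$ depends only on the subtree $T(v)$ induced by $v$ and its descendants. So, given any tree $T \in \mathcal{T}_n$ and a vertex $v$ achieving $R_{\max}(T)$, I would simply \emph{re-root} the tree at $v$, or more carefully, pass to the subtree $T(v)$ and argue as follows. Write $m = |V(T(v))| \le n$. In $T(v)$ with root $v$, the rank of $v$ is unchanged, namely $R_{T(v)}(v) = R_T(v) = R_{\max}(T)$. Now I want a tree on exactly $n$ vertices whose root has rank at least this value. If $m < n$, I attach the remaining $n-m$ vertices to $T(v)$ as a path hanging below $v$ along an already-existing deepest branch (or simply as extra descendants placed so as not to create a new short leaf-path through $v$); since adding descendants can only keep the minimum leaf-distance from the root the same or increase it, the root rank does not decrease. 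This produces a $T' \in \mathcal{T}_n$ rooted at some $r$ with $R_{T'}(r) \ge R_{\max}(T)$. Combined with the trivial inequality $R_{T'}(r) \le R_{\max}(T')\le R_{\mathcal{T}_n}$, we conclude $R_{\mathcal{T}_n}$ is attained at a root.

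For the inequality $R_T(r) \le n-1$: the root has a leaf descendant (every finite rooted tree does), and the distance from $r$ to that leaf is at most the length of the longest possible path, which is bounded by $n-1$ since a path through $n$ vertices has $n-1$ edges. More precisely, $R_T(r) = \min_{u \in L(T(r))} d(u,r) \le d(u_0, r) \le n-1$ for any particular leaf $u_0$, because any path in a tree on $n$ vertices uses at most $n$ vertices and hence at most $n-1$ edges. The equality analysis is the part requiring the most care. If $R_T(r) = n-1$, then \emph{every} leaf of $T$ is at distance exactly $n-1$ from $r$ (the minimum equals $n-1$, and no distance can exceed $n-1$). I would argue that having even one leaf at distance $n-1$ already forces a path: a path from $r$ to a leaf at distance $n-1$ uses $n$ distinct vertices, which is all of $V(T)$, so $T$ is exactly this path and $r$ is an endpoint. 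Conversely, if $T$ is a path and $r$ an endpoint, the unique leaf descendant (the other endpoint) is at distance $n-1$, giving $R_T(r) = n-1$.

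The main obstacle I anticipate is making the re-rooting/vertex-attachment argument in the first assertion fully rigorous without accidentally decreasing the root rank when padding from $m$ to $n$ vertices; one must attach the extra vertices along the existing shortest leaf-branch (extending it) rather than hanging them as new short leaves off the root, so that the minimum over leaf descendants does not drop. A clean way to sidestep subtlety is to note that if $R_{\max}(T)$ is attained at $v$, then along the shortest root-to-leaf path from $v$ there are at least $R_T(v)+1$ vertices in $T(v)$, so $m \ge R_T(v)+1$; then appending the $n-m$ surplus vertices as a pendant path extending an existing deepest leaf keeps the root's minimum leaf-distance unchanged, so $R_{T'}(r) = R_T(v)$ exactly. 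The second assertion's equality case is essentially a counting argument and should present no real difficulty once phrased in terms of a path realizing $n-1$ edges.
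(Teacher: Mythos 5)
Your proof is correct and follows essentially the same approach as the paper: the paper also reduces to showing that any vertex $v$ of maximal rank can be made the root of an $n$-vertex tree without decreasing its rank, and then applies the trivial bound $R_T(r)\le n-1$ with the path as the equality case. The only (cosmetic) difference is in the re-rooting step: the paper keeps the vertex set fixed by the single edge swap $T' = T - uv + rw$, re-hanging the component containing the old root below a leaf descendant $w$ of $v$, whereas you discard that component and pad $T(v)$ back up to $n$ vertices with a pendant path extending a leaf---both moves rest on the identical observation that material placed below a leaf descendant cannot decrease the root's rank.
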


\begin{proof}
To see this, we show that for any $T \in \mathcal{T}_n$ and $v \in V(T)$, there exists a $T' \in \mathcal{T}_n$ with root $r'$ such that
$$ R_{T'}(r') \geq R_T(v) . $$
If $v$ is already the root of $T$, there is nothing to prove. Otherwise, let $r$ be the root of $T$, $u$ be the unique parent of $v$ in $T$, and $w$ be one of the leaf descendants of $v$ in $T$. Consider the tree $T'=T - uv + rw$ as in Figure~\ref{fig:ex3}. It is obvious that $v=r'$ is now the root of $T' \in \mathcal{T}_n$.

\begin{figure}[H]
\centering
    \begin{tikzpicture}[scale=.6]
        \node[fill=black,circle,inner sep=1.4pt] (t1) at (1,0) {};
        \node[fill=black,circle,inner sep=1.4pt] (t2) at (2,1) {};
        \node[fill=black,circle,inner sep=1.4pt] (t3) at (3,0) {};
        \node[fill=black,circle,inner sep=1.4pt] (t4) at (3,2) {};
        \node[fill=black,circle,inner sep=1.4pt] (t5) at (4,1) {};
		\node[fill=black,circle,inner sep=1.4pt] (t6) at (3,3) {};
        \node[fill=black,circle,inner sep=1.4pt] (t8) at (4,4) {};
        \node[fill=black,circle,inner sep=1.4pt] (t9) at (5,3) {};

        \draw (t1)--(t4);
        \draw (t4)--(t6);
        \draw (t6)--(t8);
        \draw (t2)--(t3);
        \draw (t4)--(t5);
        \draw (t8)--(t9);
        \node at (2.6,3.1) {$u$};
        \node at (4,4.3) {$r$};
        \node at (2.6,2.1) {$v$};
        \node at (4.1,1.3) {$w$};

        \node[fill=black,circle,inner sep=1.4pt] (s1) at (0+7,0) {};
        \node[fill=black,circle,inner sep=1.4pt] (s2) at (1+7,1) {};
        \node[fill=black,circle,inner sep=1.4pt] (s3) at (2+7,0) {};
        \node[fill=black,circle,inner sep=1.4pt] (s4) at (2+7,2) {};
        \node[fill=black,circle,inner sep=1.4pt] (s5) at (3+7,1) {};
        \node[fill=black,circle,inner sep=1.4pt] (s7) at (3+7,0) {};
        \node[fill=black,circle,inner sep=1.4pt] (s9') at (4+7,-1) {};
        \node[fill=black,circle,inner sep=1.4pt] (s9'') at (2+7,-1) {};
        
        \draw (s1)--(s4);
        \draw (s2)--(s3);
        \draw (s4)--(s5);
        \draw (s9'')--(s7);
        \draw (s7)--(s9');
        \draw (s7)--(s5);
        \node at (2+7.4,-1) {$u$};
        \node at (3+7.4,0) {$r$};
        \node at (2+7,2.3) {$v$};
        \node at (3+7.2,1.3) {$w$};
        \end{tikzpicture}
\caption{An example of the trees $T$ (on the left), $T'$ (on the right) and the vertices $u,v,w$.}\label{fig:ex3}
\end{figure}
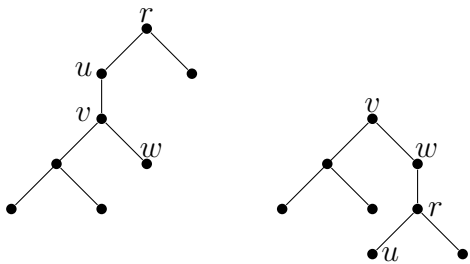

Note that from $T$ to $T'$, the distance between $v$ and any of its original leaf descendants (in $T$, with the exception of $w$, which is now not a leaf) remains the same. The distance between $v$ and any of its new leaf descendants (in $T'$) is greater than the distance between $v$ and $w$. Hence
$$ R_{T'}(v) \geq R_T(v). $$
Since the rank of any vertex in a tree on $n$ vertices cannot be possibly larger than $n-1$, the second part of the proposition follows. 
\end{proof}

\medskip
In the rest of the section we focus on finding the maximum rank of the root in various classes of trees. 

In practice it is often of interest to fix the degree of the root, $k$. In this case the \emph{starlike tree} $S(\ell_1, \ell_2, \ldots , \ell_k)$ is the tree obtained by identifying one end-vertex of each of the paths of length $\ell_1, \ell_2, \ldots , \ell_k$ respectively.

\begin{proposition}\label{prop:starlike1}
For a tree $T$ of order $n>1$, rooted at $r$ with degree $k \geq 3$, 
$$ R_T(r) \leq \left\lfloor \frac{n-1}{k} \right\rfloor $$
with equality if $T$ is a starlike tree $S(\ell_1, \ell_2, \ldots , \ell_k)$, rooted at the only branching vertex (vertex of degree $\geq 3$) with 
$$ \min_{1\leq i \leq k} \{\ell_i \} = \left\lfloor \frac{n-1}{k} \right\rfloor . $$
\end{proposition}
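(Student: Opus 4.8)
The plan is to decompose $T$ at its root and reduce the bound to the order-based root bound already established in Proposition~\ref{prop:path1}. Since $r$ has degree $k$ and is the root, it has exactly $k$ children $c_1, \dots, c_k$; write $T_i$ for the subtree rooted at $c_i$ and $n_i = |V(T_i)|$, so that $n = 1 + \sum_{i=1}^k n_i$, i.e.\ $\sum_{i=1}^k n_i = n-1$. Every leaf of $T$ lies in exactly one $T_i$, and the nearest leaf of $T$ inside $T_i$ is at distance $1 + R_{T_i}(c_i)$ from $r$ (this also correctly covers the degenerate case where $c_i$ is itself a leaf, where $n_i = 1$ and $R_{T_i}(c_i) = 0$). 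This gives the key identity
\[
R_T(r) = 1 + \min_{1 \le i \le k} R_{T_i}(c_i).
\]

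Next I would bound each term. Applying the inequality of Proposition~\ref{prop:path1} to the rooted tree $T_i$ (of order $n_i$) gives $R_{T_i}(c_i) \le n_i - 1$. Choosing an index $j$ with $n_j = \min_i n_i$, the identity yields
\[
R_T(r) \le 1 + R_{T_j}(c_j) \le 1 + (n_j - 1) = n_j = \min_{1 \le i \le k} n_i \le \frac{1}{k}\sum_{i=1}^k n_i = \frac{n-1}{k}.
\]
Since $\min_i n_i$ is an integer not exceeding $(n-1)/k$, we have $\min_i n_i \le \lfloor (n-1)/k\rfloor$, which forces $R_T(r) \le \lfloor (n-1)/k \rfloor$, the claimed bound. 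Note this direction needs no hypothesis on $k$.

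For the equality case, I would evaluate $R_T(r)$ directly on the starlike tree $S(\ell_1, \dots, \ell_k)$ rooted at its unique branching vertex $r$ (with $k \ge 3$). Each arm is a path, so $T_i$ is a path on $\ell_i$ vertices with $R_{T_i}(c_i) = \ell_i - 1$, and the leaf at the far end of arm $i$ sits at distance $\ell_i$ from $r$. The identity then gives $R_T(r) = \min_i \ell_i$, which equals $\lfloor (n-1)/k \rfloor$ by hypothesis. Since $n - 1 = \sum_i \ell_i$, writing $n-1 = qk + s$ with $0 \le s < k$ one realizes $\min_i \ell_i = q$ by taking $s$ arms of length $q+1$ and $k-s$ arms of length $q$, confirming that the bound is actually attained.

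The only genuinely delicate points are the clean reduction via the identity $R_T(r) = 1 + \min_i R_{T_i}(c_i)$ and the passage from $\min_i n_i \le (n-1)/k$ to the floor, which relies on $\min_i n_i$ being an integer rather than on any integrality of the fraction itself. Everything else is a routine application of Proposition~\ref{prop:path1} together with the averaging inequality $\min_i n_i \le \tfrac{1}{k}\sum_i n_i$.
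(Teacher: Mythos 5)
Your proof is correct and follows essentially the same route as the paper's: decompose $T-r$ into the $k$ subtrees rooted at the children of $r$, use the identity $R_T(r) = 1 + \min_i R_{T_i}(c_i)$, apply Proposition~\ref{prop:path1} to the smallest subtree, and pass from $\min_i n_i \leq (n-1)/k$ to the floor via integrality. The only (welcome) addition is that you verify the equality case on the starlike tree explicitly, which the paper leaves implicit.
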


\begin{proof}
Let the neighbourhood of $r$ be $\{ v_1, \ldots , v_k \}$ and let the connected component containing $v_i$ in $T-r$ be $T_i$, rooted at $v_i$. 
Since $n-1=|V(T)|-1 = \sum_{i=1}^k |V(T_i)|$, we have 
$$|V(T_{i_0})|:=\min_{1\leq i \leq k} \{ |V(T_i)| \} \leq \left\lfloor \frac{n-1}{k} \right\rfloor , $$
and by Proposition~\ref{prop:path1}
$$ R_{T_{i_0}}(v_{i_0}) \leq \left\lfloor \frac{n-1}{k} \right\rfloor -1 . $$
The conclusion then follows from the fact that
$$ R_T(r) = 1+ \min_{1\leq i \leq k} \{ R_{T_i}(v_i) \} . $$
\end{proof}

\begin{remark}
The starlike tree in our statement is by no means the unique tree achieving the maximum root rank. However, the $S(\ell_1, \ell_2, \ldots , \ell_k)$ with the condition that $|\ell_i - \ell_j| \leq 1$ for any $1\leq i,j\leq k$, which certainly maximizes the root rank, has been shown to be extremal with respect to many graph invariants and, in particular, to be the densest among trees with a given order and number of leaves. 
\end{remark}

\subsection{\textit{k}-ary trees} 
By $k$-ary trees, we mean rooted trees in which the outdegrees are bounded above by $k$, i.e. every vertex has at most $k$ children. 

Let $\mathcal{T}_n^k$ denote the set of $k$-ary trees of order $n$. First we show that Proposition~\ref{prop:path1} can be generalized to $\mathcal{T}_n^k$.

\begin{proposition}\label{prop:prop1'}
Let $\mathcal{T}_n^k$ denote the set of $k$-ary trees with order $n$. Then $R_{\mathcal{T}_n^k} = R_T(r)$ for some $T \in \mathcal{T}_n^k$ with root $r$.
\end{proposition}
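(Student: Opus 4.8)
The plan is to recycle, essentially verbatim, the re-rooting construction from the proof of Proposition~\ref{prop:path1}, and then to check that this construction never violates the degree bound. Concretely, suppose $R_{\mathcal{T}_n^k}$ is attained by a tree $T \in \mathcal{T}_n^k$ with root $r$ together with a vertex $v$ maximizing $R_T(v)$. If $v = r$ there is nothing to prove, so I would assume $v \neq r$, let $u$ be the parent of $v$, and let $w$ be one of the leaf descendants of $v$. As in Proposition~\ref{prop:path1}, form the tree $T' = T - uv + rw$ rooted at $v$, so that $v$ becomes the root of a tree on the same $n$ vertices.

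The one genuinely new point --- and the step I expect to require the most care --- is verifying that $T' \in \mathcal{T}_n^k$, i.e. that no vertex acquires more than $k$ children. Here the choice to reattach through the \emph{old} root $r$ is exactly what makes things work. Deleting $uv$ splits $T$ into the subtree $T(v)$ (containing $v$ and $w$) and the remainder $C = T - T(v)$ (containing $r$ and $u$), and adding $rw$ reconnects them with $r$ hanging below $w$. When we root $T'$ at $v$, the parent--child orientation inside $T(v)$ is unchanged, and because the component $C$ is re-entered at precisely its old root $r$, the orientation inside $C$ is also unchanged --- no edges are reversed. Consequently the only out-degree changes are that $u$ loses the child $v$ (its out-degree strictly decreases) and the former leaf $w$ gains the single child $r$ (its out-degree rises from $0$ to $1$); every other vertex, $r$ included, keeps exactly the children it had in $T$. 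Since $k \geq 1$, all out-degrees remain at most $k$, so $T'$ is indeed $k$-ary.

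With $T' \in \mathcal{T}_n^k$ secured, the rank comparison is identical to that of Proposition~\ref{prop:path1}: the distance from $v$ to each of its surviving leaf descendants is unchanged, while every new leaf descendant of $v$ (a leaf of $C$) lies at distance at least $d(v,w)+1 > d(v,w) \geq R_T(v)$, so that $R_{T'}(v) \geq R_T(v)$. Taking $r' = v$ as the root of $T'$ then gives $R_{T'}(r') \geq R_T(v) = R_{\mathcal{T}_n^k}$, and since $T' \in \mathcal{T}_n^k$ this shows the maximum is realized at a root, which is the assertion. I emphasize that the whole content beyond Proposition~\ref{prop:path1} is the degree-bound bookkeeping of the middle paragraph; the distance argument transfers without any modification.
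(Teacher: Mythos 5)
Your proof is correct, but your surgery is not the one the paper uses, and the difference is worth spelling out. You literally reuse the operation of Proposition~\ref{prop:path1}: delete the edge $uv$, add the edge $rw$, and re-root at $v$; the genuinely new content is your (valid) bookkeeping that only $u$'s outdegree drops by one and $w$'s rises from $0$ to $1$, so the bound $k$ is never violated. The paper instead splits $T$ into $S_1$ (the component of $v$ in $T-\mathcal{C}_v$, in which $v$ survives as a leaf below $u$) and $S_2=T(v)$, and then \emph{identifies} the old root $r$ with the leaf $w$ rather than joining them by a new edge. That vertex identification preserves the entire outdegree multiset: $u$ keeps all of its children (the copy of $v$ in $S_1$ remains a leaf child of $u$), the merged vertex inherits exactly $r$'s outdegree, and the copy of $v$ heading $S_2$ keeps $v$'s outdegree, while the leaf role simply migrates from $w$ to the copy of $v$ in $S_1$. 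This stronger invariance is precisely what the paper exploits immediately afterwards to obtain Proposition~\ref{prop:proPPP}, the analogue for trees with a prescribed outdegree sequence --- a statement your transformation cannot deliver, since it genuinely changes the outdegree sequence (for instance, $u$ may go from outdegree $2$ to outdegree $1$, and $w$ from $0$ to $1$). So both arguments prove the proposition as stated, with yours being the more economical reuse of Proposition~\ref{prop:path1}, and the paper's being engineered to carry the extra payload needed downstream.
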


\begin{proof}
Following the same idea as the proof of Proposition~\ref{prop:path1}, let $T\in \mathcal{T}_n^k$ and $v \in V(T)$ be a non-root vertex. Let $\mathcal{C}_v$ denote the set of children of $v$ in $T$, we use $S_1$ to denote the component containing $v$ in $T-\mathcal{C}_v$ and let $S_2 = T - \{ S_1 - v \}$. Let $w$ be one of the leaf descendants of $v$. We now consider the tree $T'$ resulted from identifying $r$ (of $S_1$) with $w$ (of $S_2$). See Figure~\ref{fig:ex3'}.

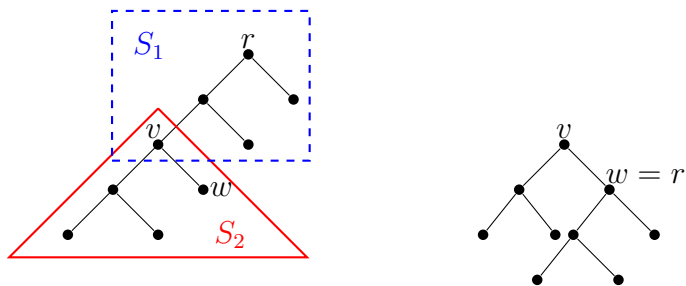
\begin{figure}[htbp]
\centering
    \begin{tikzpicture}[scale=.6]
        \node[fill=black,circle,inner sep=1.4pt] (t1) at (0,0) {};
        \node[fill=black,circle,inner sep=1.4pt] (t2) at (1,1) {};
        \node[fill=black,circle,inner sep=1.4pt] (t3) at (2,0) {};
        \node[fill=black,circle,inner sep=1.4pt] (t4) at (2,2) {};
        \node[fill=black,circle,inner sep=1.4pt] (t5) at (3,1) {};
		\node[fill=black,circle,inner sep=1.4pt] (t6) at (3,3) {};
        \node[fill=black,circle,inner sep=1.4pt] (t8) at (4,4) {};
        \node[fill=black,circle,inner sep=1.4pt] (t9) at (5,3) {};
        \node[fill=black,circle,inner sep=1.4pt] (t9') at (4,2) {};

        \draw (t1)--(t8);
        \draw (t2)--(t3);
        \draw (t4)--(t5);
        \draw (t8)--(t9);
        \draw (t6)--(t9');
        \node at (4,4.3) {$r$};
        \node at (1.9,2.3) {$v$};
        \node at (3.4,1) {$w$};
        \node[blue] (s1) at (1.8, 4.2){$S_1$};
        \node[red] (s2) at (3.6, 0){$S_2$};  

        \draw[-, red, thick] (2, 2.8)--(-1.3, -0.5)--(5.3, -0.5)--(2, 2.8);
        \node[draw = blue, dashed, thick, fit=(t4) (t6) (t8) (t9) (t9') (s1)] {};

        \node[fill=black,circle,inner sep=1.4pt] (s1) at (0.2+9,0) {};
        \node[fill=black,circle,inner sep=1.4pt] (s2) at (1+9,1) {};
        \node[fill=black,circle,inner sep=1.4pt] (s3) at (1.8+9,0) {};
        \node[fill=black,circle,inner sep=1.4pt] (s4) at (2+9,2) {};
        \node[fill=black,circle,inner sep=1.4pt] (s5) at (3+9,1) {};
		\node[fill=black,circle,inner sep=1.4pt] (s6) at (2.2+9,0) {};
        \node[fill=black,circle,inner sep=1.4pt] (s7) at (4+9,0) {};
        \node[fill=black,circle,inner sep=1.4pt] (s9') at (3.2+9,-1) {};
        \node[fill=black,circle,inner sep=1.4pt] (s9'') at (1.4+9,-1) {};       

        \draw (s2)--(s4);
        \draw (s1)--(s2);
        \draw (s2)--(s3);
        \draw (s4)--(s5)--(s7);
        \draw (s5)--(s6);
        \draw (s9'')--(s6);
        \draw (s6)--(s9');
        \node at (3+9.8,1.3) {$w = r$};
        \node at (2+9,2.3) {$v$};

    \end{tikzpicture}
    \caption{The trees $T$ (on the left), $T'$ (on the right) and the vertices $r,v,w$.}\label{fig:ex3'}
\end{figure}

It is easy to verify that $T' \in \mathcal{T}_n^k$ and that 
$$ R_{T'}(v) \geq R_T(v) $$
following the same reasoning as that of Proposition~\ref{prop:path1}.
\end{proof}

Thus, again we only need to consider the root ranks. For our extremal structure, Figure~\ref{fig:com} shows a \emph{rooted complete ternary tree}, defined in general as the following.

\begin{definition}
A rooted complete $k$-ary tree is a proprer $k$-ary tree, all of whose leaves are at two consecutive levels (vertices at the same distance from the root are at the same level) and there is at most one vertex that has both leaf and non-leaf children.
\end{definition}

\begin{figure}[htbp]

\centering
    \begin{tikzpicture}[scale=0.5]
        \node[fill=black,circle,inner sep=1.5pt] (v) at (10,6) {}; 
        \node[fill=black,circle,inner sep=1.5pt] (v1) at (4,4) {};
        \node[fill=black,circle,inner sep=1.5pt] (v2) at (10,4) {};
        \node[fill=black,circle,inner sep=1.5pt] (v4) at (16,4) {};
        \node[fill=black,circle,inner sep=1.5pt] (v11) at (2,2) {};
        \node[fill=black,circle,inner sep=1.5pt] (v12) at (4,2) {};
        \node[fill=black,circle,inner sep=1.5pt] (v13) at (6,2) {};
        \node[fill=black,circle,inner sep=1.5pt] (v21) at (8,2) {};
        \node[fill=black,circle,inner sep=1.5pt] (v22) at (10,2) {};
        \node[fill=black,circle,inner sep=1.5pt] (v23) at (12,2) {};
        \node[fill=black,circle,inner sep=1.5pt] (v41) at (14,2) {};        
        \node[fill=black,circle,inner sep=1.5pt] (v42) at (16,2) {};  
							\node[fill=black,circle,inner sep=1.5pt] (v43) at (18,2) {};

        \node[fill=black,circle,inner sep=1.5pt] (v111) at (1.5,0) {};
        \node[fill=black,circle,inner sep=1.5pt] (v112) at (2,0) {};
				 \node[fill=black,circle,inner sep=1.5pt] (v113) at (2.5,0) {};
        \node[fill=black,circle,inner sep=1.5pt] (v121) at (3.5,0) {};
        \node[fill=black,circle,inner sep=1.5pt] (v122) at (4,0) {};
				\node[fill=black,circle,inner sep=1.5pt] (v123) at (4.5,0) {};
        \node[fill=black,circle,inner sep=1.5pt] (v131) at (5.5,0) {};
        \node[fill=black,circle,inner sep=1.5pt] (v132) at (6,0) {};
				\node[fill=black,circle,inner sep=1.5pt] (v133) at (6.5,0) {};
        \node[fill=black,circle,inner sep=1.5pt] (v211) at (7.5,0) {};
        \node[fill=black,circle,inner sep=1.5pt] (v212) at (8,0) {};
				\node[fill=black,circle,inner sep=1.5pt] (v213) at (8.5,0) {};
        \node[fill=black,circle,inner sep=1.5pt] (v221) at (9.5,0) {};
        \node[fill=black,circle,inner sep=1.5pt] (v222) at (10,0) {};
        \node[fill=black,circle,inner sep=1.5pt] (v223) at (10.5,0) {};

        \draw (v)--(v1);
        \draw (v)--(v2);
        \draw (v)--(v4);
        \draw (v1)--(v11);
        \draw (v1)--(v12);
        \draw (v1)--(v13);
        \draw (v2)--(v21);
        \draw (v2)--(v22);
        \draw (v2)--(v23);
        \draw (v4)--(v41);
        \draw (v4)--(v42);
				\draw (v4)--(v43);
        \draw (v11)--(v111);
        \draw (v11)--(v112);
				\draw (v11)--(v113);
        \draw (v12)--(v121);
        \draw (v12)--(v122);
				\draw (v12)--(v123);
        \draw (v13)--(v131);
        \draw (v13)--(v132);
				\draw (v13)--(v133);
        \draw (v21)--(v211);
        \draw (v21)--(v212);
				\draw (v21)--(v213);
        \draw (v22)--(v221);
        \draw (v22)--(v222);
        \draw (v22)--(v223);
      
    \end{tikzpicture}

\caption{A rooted complete ternary tree.}
\label{fig:com}
\end{figure}
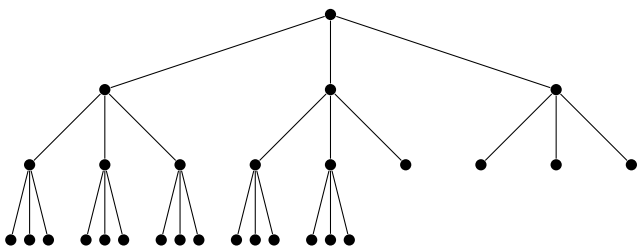

It is well known that the rooted complete $k$-ary tree is the densest among $k$-ary trees of given order, analogous to the star among general trees. We will show that such a tree also maximizes the root rank among $k$-ary trees of given order.

\begin{proposition}\label{prop:kary1}
For a proper $k$-ary tree $T$ of order $n$ and root $r$ such that $k>1$,
$$ R_T(r) \leq \left\lfloor \log_k (n(k-1)+1) \right\rfloor -1 $$
with equality if $T$ is a rooted complete $k$-ary tree.
\end{proposition}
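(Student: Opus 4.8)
The plan is to prove the inequality by a level-counting argument, and then to verify the equality case by direct computation on the rooted complete $k$-ary tree, splitting into the perfect and non-perfect subcases.

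For the upper bound, I would set $d = R_T(r)$ and exploit the definition of rank: since no leaf lies at distance less than $d$ from $r$, every vertex at depth $0, 1, \ldots, d-1$ is internal. Because $T$ is a \emph{proper} $k$-ary tree, each internal vertex has exactly $k$ children, so by induction on the depth the number of vertices at level $i$ is exactly $k^i$ for all $0 \leq i \leq d$. Summing these forced vertices gives
\[
    n \geq \sum_{i=0}^{d} k^i = \frac{k^{d+1}-1}{k-1},
\]
which rearranges to $n(k-1)+1 \geq k^{d+1}$, that is, $\log_k\!\big(n(k-1)+1\big) \geq d+1$. Since $d+1$ is an integer, taking floors yields $d \leq \lfloor \log_k(n(k-1)+1)\rfloor - 1$, the desired bound. (Here properness is essential: if internal vertices were only required to have \emph{at most} $k$ children, the forced vertex count would be far smaller and the bound would fail.)

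For the equality case, let $T$ be a rooted complete $k$-ary tree of height $h$, so levels $0, \ldots, h-1$ are completely filled and level $h$ is nonempty. If $T$ is perfect (all leaves at level $h$), then $n = \frac{k^{h+1}-1}{k-1}$, hence $R_T(r) = h$ while $n(k-1)+1 = k^{h+1}$ gives $\lfloor \log_k(n(k-1)+1)\rfloor - 1 = h$, so equality holds. If $T$ is not perfect, it has leaves at both levels $h-1$ and $h$, whence $R_T(r) = h-1$; here the vertex count satisfies $\frac{k^h-1}{k-1} < n < \frac{k^{h+1}-1}{k-1}$, equivalently $k^h < n(k-1)+1 < k^{h+1}$, so $\log_k(n(k-1)+1)$ lies strictly between the consecutive integers $h$ and $h+1$. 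Thus $\lfloor \log_k(n(k-1)+1)\rfloor = h$ and again $\lfloor \log_k(n(k-1)+1)\rfloor - 1 = h-1 = R_T(r)$.

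Neither step is genuinely difficult, and the main delicacy lies in the equality case: one must correctly identify which level carries the nearest leaf (level $h$ for a perfect tree, level $h-1$ otherwise) and then confirm that the strict inequalities on $n$ pin $\log_k(n(k-1)+1)$ strictly between consecutive integers so that the floor lands precisely on $h$. A minor supporting point is that properness forces the bottom level of a non-perfect complete tree to be a positive multiple of $k$, which only strengthens the strict inequality $n > \frac{k^h-1}{k-1}$ needed above.
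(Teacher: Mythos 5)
Your proof is correct and follows essentially the same route as the paper's: a level-counting argument (properness forces exactly $k^i$ vertices at each depth $i \leq R_T(r)$, giving $n \geq \frac{k^{d+1}-1}{k-1}$) for the upper bound, and a direct analysis of the rooted complete $k$-ary tree for the equality case. The only cosmetic difference is that the paper packages this as two claims combined into an if-and-only-if condition on $n$, while you verify the bound and the equality (with the perfect/non-perfect split) directly; the underlying counting is identical.
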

\begin{proof}
First we claim that 
$$ R_T(r) < h \hbox{ if } n < 1+k+k^2 + \cdots + k^h = \frac{k^{h+1}-1}{k-1} . $$
Indeed, to have $R_T(r) \geq h$ requires each leaf to be at distance at least $h$ from $r$, implying that every vertex within distance $h-1$ of $r$ has $k$ children. Such a tree has at least $1+k+ \cdots +k^h$ vertices.
On the other hand, if $n \geq 1+k+k^2 + \cdots + k^{h-1} = \frac{k^{h}-1}{k-1}$, in the rooted complete $k$-ary tree every vertex within distance $h-2$ of $r$ has $k$ children, implying that $R_T(r) \geq h-1$.

Consequently we have
$$ \max R_T(r) = h-1 $$
if and only if
$$ \frac{k^{h}-1}{k-1} \leq n < \frac{k^{h+1}-1}{k-1} , $$
which is equivalent to
$$ h \leq \log_k (n(k-1)+1) < h+1 . $$
\end{proof}

\medskip
The tree transformation from $T$ to $T'$ in Proposition~\ref{prop:prop1'} preserves the entire outdegree sequence. Indeed, the only change in the vertex outdegrees comes from vertices $r$, $v$ and $w$ of $T$. Back to the proof of Proposition~\ref{prop:prop1'}, relabel $v$ as $v_1$ in the subtree $S_1$ (rooted at $r$) of $T$, and $v$ as $v_2$ in the subtree $S_2$ (rooted at $v$) of $T$. Since in the tree $T'$, vertices $w$ and $r$ are identified (say, as a new vertex $v'$), we deduce the following:
\begin{itemize}
\item $w$ is a leaf in $T$, and $v_1$ becomes a leaf in $T'$;
\item the outdegree of $r$ in $T$ equals its outdegree in $S_1$, and the outdegree of $v'$ in $T'$ equals its outdegree in $S_1$ (as a subtree of $T'$);
\item the outdegree of $v$ in $T$ equals its outdegree in $S_2$, and the outdegree of $v_2$ in $T'$ equals its outdegree in $S_2$ (as a subtree of $T'$).
\end{itemize}
Thus, we obtain:
\begin{proposition}\label{prop:proPPP}
Let $\mathcal{T}_\mathbb{S}$ denote the set of all rooted trees with a given outdegree sequence $\mathbb{S}$. Then $R_{\mathcal{T}_\mathbb{S}} = R_T(r)$ for some $T \in \mathcal{T}_\mathbb{S}$ with root $r$.
\end{proposition}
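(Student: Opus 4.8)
The plan is to deduce the statement directly from the vertex-to-root transformation $T \mapsto T'$ built in the proof of Proposition~\ref{prop:prop1'}, once we check that this transformation is invariant on the outdegree sequence. Since $\mathbb{S}$ is fixed, the family $\mathcal{T}_\mathbb{S}$ is finite, so there exist a tree $T \in \mathcal{T}_\mathbb{S}$ and a vertex $v \in V(T)$ with $R_T(v) = R_{\mathcal{T}_\mathbb{S}}$. If $v$ is the root of $T$ we are done, and if $\mathbb{S}$ corresponds to a single vertex the claim is trivial; otherwise $R_{\mathcal{T}_\mathbb{S}} \geq 1$ forces the maximizer $v$ to be a non-root internal vertex, which therefore has a leaf descendant $w \neq v$.

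First I would recall the construction from Proposition~\ref{prop:prop1'}: split $T$ at the children $\mathcal{C}_v$ into the upper piece $S_1$ (in which $v$ appears as a leaf, rooted at the original root $r$) and the lower piece $S_2 = T(v)$ (rooted at $v$), and then form $T'$ by identifying $r$ with the leaf $w$ of $S_2$, rooting $T'$ at $v$. Proposition~\ref{prop:prop1'} already supplies $R_{T'}(v) \geq R_T(v)$, and by construction $v$ is the root of $T'$; so the only thing left to verify is that $T' \in \mathcal{T}_\mathbb{S}$.

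The crux is therefore the outdegree bookkeeping, which I would carry out exactly as sketched in the paragraph preceding the statement. Splitting at $v$ duplicates $v$ into a root-copy (the root of $S_2$, retaining all $|\mathcal{C}_v|$ children) and a leaf-copy $v_1$ inside $S_1$; the identification $v' := r = w$ attaches precisely the children of $r$ to the former leaf $w$. Hence the three vertices whose outdegrees change are $r$, $v$ and $w$, and the multiset $\{\,\text{outdeg}(r),\ |\mathcal{C}_v|,\ 0\,\}$ in $T$ reappears, unchanged, as the outdegrees of $v'$, the root $v$ of $T'$, and the leaf $v_1$ of $T'$. Every remaining vertex keeps its outdegree, so $T$ and $T'$ have the same outdegree sequence and $T' \in \mathcal{T}_\mathbb{S}$.

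Finally I would combine the two facts: $R_{T'}(v) \geq R_T(v) = R_{\mathcal{T}_\mathbb{S}}$ together with the maximality of $R_{\mathcal{T}_\mathbb{S}}$ force $R_{T'}(v) = R_{\mathcal{T}_\mathbb{S}}$, with $v$ the root of a tree $T' \in \mathcal{T}_\mathbb{S}$. The one genuinely delicate point, and hence the main obstacle, is the outdegree accounting: one must keep careful track of how splitting at $v$ produces two copies of $v$ (a root and a leaf) and how gluing $r$ onto the leaf $w$ transfers $r$'s children without creating or destroying any outdegree value, so that no entry of $\mathbb{S}$ is altered.
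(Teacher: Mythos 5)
Your proof is correct and takes essentially the same route as the paper: both apply the $T \mapsto T'$ transformation from Proposition~\ref{prop:prop1'} to a rank-maximizing non-root vertex $v$ and then verify that the outdegree sequence is preserved by tracking the three affected vertices $r$, $v$, $w$, whose outdegree multiset $\{\mathrm{outdeg}(r),\ |\mathcal{C}_v|,\ 0\}$ reappears at the identified vertex $v'$, the new root $v$, and the new leaf $v_1$. Your bookkeeping matches the paper's argument exactly, so there is nothing to add.
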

Further corollaries can follow, as eg. prescribing the number of leaves over the set of all rooted trees having no vertex of outdegree $1$ (\textit{a.k.a} topological trees or series-reduced trees).

\section{Conclusions and future work}

We have produced two different constructions of proper binary trees on a given number of leaves which obtain maximum security. Certainly there are other maximal constructions, as indicated in Proposition~\ref{prop:flips}, and it remains an open problem to classify all structures. In addition, counting the number of maximal configurations would result in an answer to the following question. 

\begin{question}
    Among all proper binary trees of order $n$, what is the probability that a uniformly chosen tree has maximum security?
\end{question}

Our study has largely been restricted to proper binary trees. The history and development of studies on the protection number also began on specific families of trees, only recently handling cases related to general families of trees. This leads to the question below. 

\begin{question}
    Let $X$ be your favourite family of rooted trees. Classify trees of type $X$ for which security is maximized. 
\end{question}

\begin{question}
   Considering Proposition~\ref{prop:path1}, can the statement be generalized to other classes of trees, such as rooted $k$-ary trees, and trees with a given segment sequence.
\end{question}

Finally, from a stochastic perspective, we present the following problem. 
\begin{question}
    Given a tree that obtains maximum security on $\ell \geq 2$ leaves, we label the $\ell$ leaves with index proportional to the height of the leaf (each leaf is numbered uniquely $x_0, x_1, \ldots, x_{\ell-1}$ with lower index allocated to lower height). ``Grow'' each $x_i$ into a complete tree of height $i$. We claim that this is often also a maximal tree.
    When is this tree maximal, when is it not?
\end{question}

As far as we are aware, the security of trees is a new approach to studying the protection number or rank of trees. Therefore, it opens up a number of future directions yet to be explored, and we hope that our results in this direction are just the tip of the research iceberg!

\acknowledgements
\label{sec:ack}
This material is based upon work supported by the National Science Foundation under Grant No. DMS 1641020, and was funded in part by the Austrian Science Fund (FWF) [10.55776/DOC78] and the Engineering and Physical Sciences Research Council (EPSRC) [EP/X03027X/1]. For the purpose of open access, the authors have applied a CC BY public copyright license to any author-accepted manuscript version arising from this submission. The authors are grateful to the reviewers for their insightful comments, which significantly improved this manuscript.


\nocite{*}
\bibliographystyle{abbrvnat}
\bibliography{sample-dmtcs}
\label{sec:biblio}

\end{document}